\theoremstyle{plain}
\newtheorem{theorem}{Theorem}[section]
\newtheorem{lemma}[theorem]{Lemma}
\newtheorem{corollary}[theorem]{Corollary}
\newtheorem{proposition}[theorem]{Proposition}
\newtheorem{Obs}[theorem]{Observation}
\newtheorem{remark}[theorem]{Remark}
\theoremstyle{definition}
\def\finf{\mathop{{\rm I}\kern -.27 em {\rm F}}\nolimits}
\begin{document}

\title{On Zero Forcing Number of Graphs and Their Complements}

\author{{\bf Linda Eroh$^1$}, {\bf Cong X. Kang$^2$}, and {\bf Eunjeong Yi$^3$}\\
\small$^1$ University of Wisconsin Oshkosh, Oshkosh, WI 54901, USA\\
\small $^{2,3}$Texas A\&M University at Galveston, Galveston, TX 77553, USA\\
$^1${\small\em eroh@uwosh.edu}; $^2${\small\em kangc@tamug.edu}; $^3${\small\em yie@tamug.edu}}

\maketitle

\date{}

\begin{abstract}
The \emph{zero forcing number}, $Z(G)$, of a graph $G$ is the minimum cardinality of a set $S$ of black vertices (whereas vertices in $V(G) \setminus S$ are colored white) such that $V(G)$ is turned black after finitely many applications of ``the color-change rule": a white vertex is converted to a black vertex if it is the only white neighbor of a black vertex. Zero forcing number was introduced and used to bound the minimum rank of graphs by the ``AIM Minimum Rank -- Special Graphs Work Group". It's known that $Z(G)\geq \delta(G)$, where $\delta(G)$ is the minimum degree of $G$. We show that $Z(G)\leq n-3$ if a connected graph $G$ of order $n$ has a connected complement graph $\overline{G}$. Further, we characterize a tree or a unicyclic graph $G$ which satisfies either $Z(G)+Z(\overline{G})=\delta(G)+\delta(\overline{G})$ or $Z(G)+Z(\overline{G})=2(n-3)$.
\end{abstract}

\noindent \textbf{Keywords:} zero forcing set, zero forcing number, Nordhaus-Gaddum-type result, tree, unicyclic graph

\vskip .1in

\noindent \textbf{Mathematics Subject Classification 2010:} 05C50, 05C05, 05C38, 05D99

\section{Introduction}

Let $G = (V(G),E(G))$ be a finite, simple, undirected, connected graph of order $|V(G)|=n \ge 2$ and size $|E(G)|$. For $W \subseteq V(G)$, we denote by $G[W]$ the subgraph of $G$ induced by $W$. For a vertex $v \in V(G)$, the \emph{open neighborhood of $v$} is the set $N_G(v)=\{u \mid uv \in E(G)\}$. The \emph{degree} $\deg_G(v)$ of a vertex $v \in V(G)$ is the the number of edges incident to the vertex $v$ in $G$; a \emph{leaf} is a vertex of degree one. We denote by $\Delta(G)$ the \emph{maximum degree} of a graph $G$, and denote by $\delta(G)$ the \emph{minimum degree} of a graph $G$. We denote by $K_n$, $C_n$, and $P_n$ the complete graph, the cycle, and the path, respectively, on $n$ vertices. The \emph{distance} between two vertices $v, w \in V(G)$, denoted by $d_G(v, w)$, is the length of a shortest path between $v$ and $w$; we omit $G$ when ambiguity is not a concern. The \emph{diameter}, $diam(G)$, of a graph $G$ is given by $\max\{d(u, v) \mid u,v \in V(G)\}$. The \emph{complement} $\overline{G}$ of a graph $G$ is the graph whose vertex set is $V(G)$ and $uv \in E(\overline{G})$ if and only if $uv \not\in E(G)$ for $u,v \in V(G)$. For other terms in graph theory, refer to \cite{CZ}.

The notion of a zero forcing set, as well  as the associated zero forcing number, of a simple graph was introduced by the ``AIM Minimum Rank -- Special Graphs Work Group" in~\cite{AIM} to bound the minimum rank of associated matrices for numerous families of graphs. Let each vertex of a graph $G$ be given one of two colors, ``black" and ``white" by convention. Let $S$ denote the (initial) set of black vertices of $G$. The \emph{color-change rule} converts the color of a vertex from white to black if the white vertex $u_2$ is the only white neighbor of a black vertex $u_1$; we say that $u_1$ forces $u_2$, which we denote by $u_1 \rightarrow u_2$. And a sequence, $u_1 \rightarrow u_2 \rightarrow \cdots \rightarrow u_{i} \rightarrow u_{i+1} \rightarrow \cdots \rightarrow u_t$, obtained through iterative applications of the color-change rule is called a \emph{forcing chain}. Note that, at each step of the color change, there may be two or more vertices capable of forcing the same vertex. The set $S$ is said to be \emph{a zero forcing set} of $G$ if all vertices of $G$ will be turned black after finitely many applications of the color-change rule. The \emph{zero forcing number} of $G$, denoted by $Z(G)$, is the minimum of $|S|$ over all zero forcing sets $S \subseteq V(G)$.

Since its introduction by the aforementioned ``AIM group", zero forcing number has become a graph parameter studied for its own sake, as an interesting invariant of a graph. The four authors in~\cite{iteration} studied the number of steps it takes for a zero forcing set to turn the entire graph black; they named this new graph parameter the \emph{iteration index} of a graph: from a ``real world" modeling (or discrete dynamical system) perspective, if the initial black set is capable of passing a certain condition or trait to the entire population (i.e. ``zero forcing"), then the iteration index of a graph may represent the number of units of time (anything from days to millennia) necessary for the entire population to acquire the condition or trait. Independently, Hogben et al. studied the same parameter (iteration index) in~\cite{proptime}, which they called \emph{propagation time}. It's also noteworthy that physicists have independently studied the zero forcing parameter, referring to it as the \emph{graph infection number}, in conjunction with the control of quantum systems (see \cite{p1}, \cite{p2}, and \cite{p3}). More recently, a probabilistic interpretation of zero forcing was introduced in \cite{pzf}, and a comparative study of metric dimension and zero forcing number for graphs was initiated in~\cite{dimZ}. For more articles and surveys pertaining to the zero forcing parameter, see \cite{pathcover, min-degree, iteration, Z+e, ZFsurvey, ZFsurvey2, cutvertex}.

In this paper, we obtain a Nordhaus-Gaddum-type result (see \cite{nordhaus}) on zero forcing number of graphs by first showing that $Z(G)\leq n-3$ if both $G$ and $\overline{G}$ are connected graphs of order $n$. It's known that $Z(G)\geq \delta(G)$; thus, $\delta(G)+\delta(\overline{G}) \le Z(G)+Z(\overline{G}) \le 2(n-3)$ for connected graphs $G$ and $\overline{G}$ of order $n$. Further, we characterize a tree or a unicyclic graph $G$ which satisfies either $Z(G)+Z(\overline{G})=\delta(G)+\delta(\overline{G})$ or $Z(G)+Z(\overline{G})=2(n-3)$.


\section{Bounds for $Z(G)+Z(\overline{G})$}

The \emph{path cover number} $P(G)$ of $G$ is the minimum number of vertex disjoint paths, occurring as induced subgraphs of $G$, that cover all the vertices of $G$. First, we recall some results on zero forcing number of graphs.

\begin{theorem} \label{pathcover}
\begin{itemize}
\item[(a)] \cite{pathcover} For any graph $G$, $P(G) \le Z(G)$.
\item[(b)] \cite{AIM} For any tree $T$, $P(T) = Z(T)$.
\item[(c)] \cite{cutvertex} For any unicyclic graph $G$, $P(G)=Z(G)$.
\end{itemize}
\end{theorem}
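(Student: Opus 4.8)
The plan is to establish the three assertions separately, treating (a) as a general structural inequality and (b), (c) as the matching converse for their special classes.

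For part (a), I would argue via forcing chains. Fix a minimum zero forcing set $S$ with $|S|=Z(G)$ and fix one valid sequence of color changes. Each white vertex is forced exactly once, since it cannot be forced after becoming black, and each black vertex forces at most one vertex, since once it forces its unique white neighbor it has no white neighbor left. Recording, for each forced vertex, the vertex that forced it yields a collection of vertex-disjoint chains, one emanating from each element of $S$; hence there are exactly $|S|=Z(G)$ chains and they partition $V(G)$. It then remains to check that each chain $u_1 \to u_2 \to \cdots \to u_t$ is an induced path. Consecutive vertices are adjacent by the definition of a force. For a non-consecutive pair $u_i, u_j$ with $j>i+1$: at the moment $u_i$ forces $u_{i+1}$, the vertex $u_{i+1}$ is the unique white neighbor of $u_i$, whereas $u_j$ is still white because it is forced strictly later; were $u_iu_j \in E(G)$, the vertex $u_i$ would have had two white neighbors, a contradiction. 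Thus each chain is chordless, i.e. an induced path, and $P(G)\le Z(G)$.

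For parts (b) and (c), the inequality from (a) reduces the problem to producing, for a tree $T$ (respectively a unicyclic graph $G$), a zero forcing set of size $P(T)$ (respectively $P(G)$). I would begin with a minimum cover of the vertices by vertex-disjoint induced paths and designate an appropriate endpoint of each path as black. For a tree the chosen endpoints can force along their paths step by step, because the acyclic structure guarantees that once the initial segment of a chain has been colored, the next path-vertex is eventually the unique white neighbor of the current frontier; the absence of cycles prevents the branching ambiguity that would otherwise block forcing. For a unicyclic graph the same idea applies on the tree-like part, and the single cycle is seeded so that two suitably placed vertices of the cycle become black, after which the cycle forces around in both directions, mirroring the equality $Z(C_n)=P(C_n)=2$.

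The main obstacle is the converse inequality in (b) and (c): part (a) is a clean and fully general fact, but showing $Z(G)\le P(G)$ requires proving that a carefully chosen set of path endpoints really does force the entire graph. The delicate point is selecting which endpoint of each induced path to color black and controlling how distinct paths interact at shared neighborhoods; in the unicyclic case the cycle must be seeded correctly so that propagation does not stall, which is precisely where an unavoidable case analysis, organized around how the cycle meets the chosen path cover, enters.
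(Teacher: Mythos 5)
This theorem is stated in the paper as a collection of recalled results with citations to \cite{pathcover}, \cite{AIM}, and \cite{cutvertex}; the paper itself supplies no proof, so your attempt can only be measured against the cited sources. Your argument for part (a) is complete and correct, and it is essentially the standard one: a fixed chronological list of forces partitions $V(G)$ into $|S|$ vertex-disjoint forcing chains, and the observation that $u_j$ ($j>i+1$) is still white when $u_i$ performs its force shows each chain is chordless. No complaints there.

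Parts (b) and (c) contain a genuine gap. The inequality $Z\le P$ is the entire content of those statements, and your text for it is not a proof but a restatement of what must be shown: the assertion that ``the acyclic structure guarantees that \ldots the next path-vertex is eventually the unique white neighbor of the current frontier'' is exactly the claim in question, and nothing in your sketch rules out stalling. The difficulty is real: an arbitrary choice of one endpoint per path in a minimum path cover need not be a zero forcing set, so one must prove that a \emph{good} choice exists, and this requires structure. The standard route for trees (as in \cite{AIM}) is an induction that peels off a suitably chosen path of the cover --- one attached to the rest of the tree in a controlled way --- and verifies that forcing along it can be completed before or after the rest; alternatively one can invoke $M(T)=P(T)$ together with $M\le Z\le P$. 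For unicyclic graphs, \cite{cutvertex} does not argue by seeding the cycle directly but reduces via the cut-vertex formula (Theorem~\ref{cutV} of this paper) to the tree case plus the cycle. As written, your proposal establishes only $P\le Z$ in (b) and (c); the converse inequality, which you correctly identify as the main obstacle, is left unproved.
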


\begin{theorem}\cite{min-degree}\label{mindegree}
For any graph $G$ of order $n \ge 2$, $Z(G) \ge \delta(G)$.
\end{theorem}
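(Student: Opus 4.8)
The plan is to fix an arbitrary minimum zero forcing set $S$ with $|S| = Z(G)$ and to show directly that $|S| \ge \delta(G)$ by inspecting the very first application of the color-change rule. First I would dispose of the trivial case in which $S = V(G)$: here $|S| = n$, and since $G$ is simple we have $\delta(G) \le n-1 < n$, so the inequality holds outright. Thus I may assume $V(G) \setminus S \neq \emptyset$, so that at least one white vertex exists initially; because $S$ is a zero forcing set, the color-change rule must be applied at least once, and hence there is a well-defined \emph{first} force.

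The core of the argument concerns that first force. Suppose $u_1 \rightarrow u_2$ is the first application of the color-change rule, where $u_1$ is black (hence $u_1 \in S$) and $u_2$ is white. Since this is the first force, no white vertex has yet been recolored, so the set of black vertices at this instant is exactly $S$. By the color-change rule, $u_2$ is the \emph{only} white neighbor of $u_1$; therefore every neighbor of $u_1$ other than $u_2$ is black, i.e.\ $N_G(u_1) \setminus \{u_2\} \subseteq S$. I would then count: the vertex $u_1$ together with the vertices in $N_G(u_1) \setminus \{u_2\}$ form $1 + (\deg_G(u_1) - 1) = \deg_G(u_1)$ distinct elements of $S$, where distinctness follows because $u_1 \notin N_G(u_1)$ (no loops) and $u_2 \notin S$ (it is white). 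Hence $Z(G) = |S| \ge \deg_G(u_1) \ge \delta(G)$, which is exactly the desired bound.

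I do not anticipate a serious obstacle, since the inequality falls out of a single step of the forcing process. The only points that require genuine care are verifying that the counted vertices are truly distinct, so that $\deg_G(u_1)$ is an exact count rather than an overcount, and separating off the degenerate case $S = V(G)$ in which no force ever occurs and the ``first force'' would otherwise be undefined. Both are handled above, so the plan yields a complete proof.
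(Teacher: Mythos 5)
Your proof is correct, but note that the paper itself contains no proof of this statement: it is quoted as a known result from \cite{min-degree}, so there is no internal argument to compare against. Your argument is the standard elementary one, and it is airtight: the reduction to the case $S \neq V(G)$ (using $\delta(G) \le n-1 < n = |S|$ otherwise) makes the ``first force'' well defined, and the counting step is handled with the necessary care --- $u_2 \in N_G(u_1)$ because a vertex can only force a neighbor, so $|N_G(u_1)\setminus\{u_2\}| = \deg_G(u_1)-1$ exactly; these vertices lie in $S$ because no recoloring has yet occurred; and they are distinct from $u_1$ (no loops) while $u_2 \notin S$, giving $|S| \ge \deg_G(u_1) \ge \delta(G)$. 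It is worth knowing that the cited source obtains the inequality by a different, linear-algebraic route, essentially through maximum nullity: one shows $M(G) \ge \delta(G)$ (equivalently, a minimum-rank bound) and combines this with $Z(G) \ge M(G)$ from \cite{AIM}. That route embeds the bound in the minimum-rank framework that motivated zero forcing in the first place, whereas yours is purely combinatorial, needs no matrix theory, and is self-contained --- very much in the spirit of the authors' own stated preference (see their remark following Theorem 2.5) for simple, direct, self-contained arguments.
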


\begin{theorem} \cite{dimZ, cutvertex} \label{ObsZ} Let $G$ be a connected graph of order $n \ge 2$. Then
\begin{itemize}
\item[(a)] $Z(G)=1$ if and only if $G=P_n$;
\item[(b)] $Z(G)=n-1$ if and only if $G=K_n$.
\end{itemize}
\end{theorem}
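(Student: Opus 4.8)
The two equivalences in Theorem~\ref{ObsZ} each split into an easy ``construction'' direction, where one exhibits an explicit zero forcing set of the claimed size together with a matching lower bound, and a harder ``characterization'' direction, where one must rule out every other graph. The plan is to dispatch the constructions first and then invest the real effort in the two converses; for part~(a) I will lean on the path-cover bound of Theorem~\ref{pathcover}(a), while for part~(b) I will argue by contraposition, producing a zero forcing set of size $n-2$ whenever $G\neq K_n$.

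For part~(a), the direction $G=P_n \Rightarrow Z(G)=1$ is immediate: coloring a single endpoint of the path black, the color-change rule propagates down the path one vertex at a time, since each black vertex has a unique white neighbor at the moment it acts; hence $\{v_1\}$ is a zero forcing set and $Z(P_n)\le 1$, and as the empty set forces nothing we get $Z(P_n)=1$. For the converse I would use Theorem~\ref{pathcover}(a): $Z(G)=1$ forces $1\le P(G)\le Z(G)=1$, so $P(G)=1$. But $P(G)=1$ means all of $V(G)$ is covered by a single induced path, and an induced path on the whole vertex set is nothing but $G$ itself; hence $G=P_n$. This route sidesteps the delicate bookkeeping of tracking a forcing chain from one vertex and arguing that no chord can exist.

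For part~(b), the construction $G=K_n\Rightarrow Z(G)=n-1$ runs as follows. With $n-1$ black vertices and one white vertex $w$, every black vertex sees $w$ as its unique white neighbor, so $w$ is forced and $Z(K_n)\le n-1$; conversely, if two vertices are left white then, by completeness, every black vertex is adjacent to both of them, hence has two white neighbors and can never act, so no $(n-2)$-set forces and $Z(K_n)\ge n-1$. For the converse I would prove the contrapositive: assuming $G\neq K_n$, fix non-adjacent vertices $u,w$ and exhibit a zero forcing set of size $n-2$. If some vertex $z\notin\{u,w\}$ is adjacent to exactly one of $u,w$, then leaving $\{u,w\}$ white lets $z$ force that vertex, after which the last white vertex is forced by any (now black) neighbor, using connectedness. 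Otherwise every vertex outside $\{u,w\}$ is adjacent to both or to neither of $u,w$; since $G$ is connected and $u\not\sim w$, this forces $N(u)=N(w)\neq\varnothing$, i.e.\ $u$ and $w$ are non-adjacent twins. In that case I would instead leave white the pair $\{u,a\}$ for a common neighbor $a\in N(u)=N(w)$: the black vertex $w$ has $a$ as its only white neighbor (because $w\not\sim u$) and forces $a$, and then $a$ forces $u$. Either way $Z(G)\le n-2$, contradicting $Z(G)=n-1$.

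The main obstacle is the converse of part~(b), specifically the ``twin'' subcase: the naive choice of leaving $u$ and $w$ themselves white can get stuck, since every common neighbor then has two white neighbors, so the key idea is to re-select the white pair as a vertex and one of its neighbors, leaving some black vertex with a unique white neighbor. Connectivity is used in both subcases to guarantee that the final white vertex has a black neighbor available to force it, and I would double-check that no degenerate small cases (e.g.\ $n=2$) escape the argument.
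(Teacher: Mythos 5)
Your proof is correct. Note, however, that the paper does not prove Theorem~\ref{ObsZ} at all: it is recalled as a known result with citations to \cite{dimZ} and \cite{cutvertex}, so there is no in-paper argument to compare against. What you supply is a clean, self-contained verification. For part~(a) your use of Theorem~\ref{pathcover}(a) to get $P(G)=1$ and hence $G=P_n$ is a legitimate shortcut, provided one accepts that bound as given (as the paper does). For part~(b) your contrapositive is sound, and you correctly identify and handle the one genuinely delicate configuration: when the non-adjacent pair $u,w$ are twins ($N(u)=N(w)$), leaving $\{u,w\}$ white stalls, and switching the white pair to $\{u,a\}$ for a common neighbor $a$ lets the black twin $w$ force $a$ and then $a$ force $u$. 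The forward directions and the small cases ($n=2,3$) all check out. The only thing your write-up relies on implicitly is the monotonicity of zero forcing sets (a superset of a zero forcing set is again one), used when you conclude $Z(K_n)\ge n-1$ from the failure of $(n-2)$-sets; that is standard and harmless.
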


\begin{theorem}\cite{cutvertex} \label{cutV}
Let $G$ be a graph with cut-vertex $v \in V(G)$. Let $V_1, V_2, \ldots, V_k$ be the vertex sets for the connected components of $G[V(G) \setminus \{v\}]$, and for $1\le i \le k$, let $G_i$ = $G[V_i \cup \{v\}]$. Then $Z(G) \ge  1-k+\sum_{i=1}^{k} Z(G_i)$.
\end{theorem}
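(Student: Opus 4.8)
The plan is to start with a minimum zero forcing set $S$ of $G$, fix an associated forcing process (a valid sequence of forces coloring all of $V(G)$), and project this process onto each augmented component $G_i$. The crucial structural fact I would exploit first is that $v$ is a cut-vertex: every vertex $u \in V_i$ has $N_G(u) \subseteq V_i \cup \{v\} = V(G_i)$, so $N_G(u) = N_{G_i}(u)$. Thus whether a vertex of $V_i$ can force, or be forced, depends only on the colors of vertices in $V_i \cup \{v\}$; the other components are invisible from inside $G_i$ except through the shared vertex $v$.

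With this in hand, I would prove the intermediate claim that $(S \cap V_i) \cup \{v\}$ is a zero forcing set of $G_i$ for each $i$. To do so I extract from the global process the subsequence of forces $u \to w$ with $w \in V_i$, and argue this subsequence is itself a valid forcing process in $G_i$ from the initial black set $(S \cap V_i) \cup \{v\}$. Checking validity splits into two cases for the forcer $u$: if $u \in V_i$ then $N_{G_i}(u) = N_G(u)$ and the claim is immediate; if $u = v$ then $w$ being the unique white $G$-neighbor of $v$ forces $w$ to be the unique white $G_i$-neighbor of $v$ as well. The only wrinkle is that $v$ is black from the start in the $G_i$-process even though it may be colored late in $G$; but making a vertex black early only removes white neighbors, so by a monotonicity observation no force is invalidated. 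This yields $Z(G_i) \le |S \cap V_i| + 1$ and, summing, $\sum_i Z(G_i) \le |S| + k$ when $v \notin S$ (and $|S| + k - 1$ when $v \in S$).

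The main obstacle is that this naive sum is off by one from the asserted bound: it gives $Z(G) \ge \sum_i Z(G_i) - k$ rather than $-(k-1)$. Recovering the extra unit is the heart of the argument and splits on whether $v \in S$. If $v \in S$, then $\sum_i |S \cap V_i| = |S| - 1$ and the bound $\sum_i Z(G_i) \le |S| + k - 1$ is already sharp. If $v \notin S$, then $v$ is forced at some stage of the global process by a vertex $u^* \in V_j$ for exactly one index $j$. I would then show that the \emph{un-augmented} set $S \cap V_j$ is already a zero forcing set of $G_j$: replaying the $V_j$-forces that precede the coloring of $v$, none of which can use a forcer adjacent to $v$ (since $v$ is still white, an adjacent forcer would have two white neighbors), colors every $V_j$-neighbor of $u^*$ black, at which point $u^*$ sees $v$ as its unique white $G_j$-neighbor and forces it; the remaining $V_j$-forces then replay as before. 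Hence $Z(G_j) \le |S \cap V_j|$ with no added vertex.

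Combining the two cases, I choose $T_j = S \cap V_j$ for the self-forcing component (when $v \notin S$) and $T_i = (S \cap V_i) \cup \{v\}$ otherwise, obtaining $\sum_i Z(G_i) \le \sum_i |T_i| = |S| + (k-1) = Z(G) + k - 1$, which rearranges to the claimed $Z(G) \ge 1 - k + \sum_{i=1}^{k} Z(G_i)$. I expect the delicate bookkeeping to lie entirely in the $v \notin S$ case — specifically, verifying that the forces preceding the coloring of $v$ never rely on $v$ being black, so that $G_j$ can bootstrap the coloring of its own copy of the cut-vertex without paying for it separately.
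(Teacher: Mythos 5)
The paper states this result only as a citation to Row's article \cite{cutvertex} and supplies no proof of its own, so there is no internal argument to compare against; judged on its merits, your proof is correct and is essentially the standard one from that reference: restrict a minimum forcing process of $G$ to each $G_i$ (valid because $N_G(u)=N_{G_i}(u)$ for $u\in V_i$), pay one extra vertex ($v$) per component, and recover the missing unit by noting that the component whose vertex forces $v$ (or any component at all, when $v\in S$) does not need $v$ adjoined. The two delicate points you isolate --- monotonicity of the forcing closure under enlarging the initial black set, and the fact that a force with target in $V_j$ performed while $v$ is still white cannot have a forcer adjacent to $v$ --- are exactly the right ones, and your handling of both is sound.
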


\begin{theorem}\label{connected}
Let $G$ and $\overline{G}$ be connected graphs of order $n \ge 4$. Then $Z(G) \le n-3$.
\end{theorem}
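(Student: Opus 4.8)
The plan is to exhibit an explicit zero forcing set of size $n-3$ by running a forcing chain along an induced path on four vertices. First I record the key observation: if $G$ contains an induced path $v_1 v_2 \cdots v_k$ (so that $v_i v_j \in E(G)$ precisely when $|i-j|=1$), then $Z(G) \le n-k+1$. Indeed, color black the set $S = V(G) \setminus \{v_2, v_3, \ldots, v_k\}$, which has cardinality $n-k+1$, so that the only white vertices are $v_2, \ldots, v_k$. At the stage where $v_1, \ldots, v_i$ are black, every neighbor of $v_i$ lying off the path already belongs to $S$ and is black, while the neighbors of $v_i$ on the path are among $v_{i-1}$ (already black) and $v_{i+1}$ because the path is induced; thus $v_{i+1}$ is the unique white neighbor of $v_i$ and $v_i \rightarrow v_{i+1}$. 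Iterating, $v_1 \rightarrow v_2 \rightarrow \cdots \rightarrow v_k$ turns all of $V(G)$ black, so $S$ is a zero forcing set. Taking $k=4$, it therefore suffices to show that $G$ contains an induced $P_4$.

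So the theorem reduces to the purely structural claim that a graph $G$ with both $G$ and $\overline{G}$ connected (a hypothesis that already forces $n \ge 4$) must contain an induced $P_4$. I would first dispose of the high-diameter case: if $diam(G) \ge 3$, pick $u,v$ with $d_G(u,v)=3$, and note that any shortest $u$--$v$ path is an induced $P_4$. Since $P_4$ is self-complementary, the same reasoning applied to $\overline{G}$ shows that if $diam(\overline{G}) \ge 3$ then $\overline{G}$, and hence $G$, contains an induced $P_4$. It remains to treat the case $diam(G)=diam(\overline{G})=2$; here $G$ is neither complete nor edgeless, since $\overline{K_n}$ is disconnected, and in particular $G \neq K_n$ already gives $Z(G) \le n-2$ by Theorem~\ref{ObsZ}(b).

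The residual case $diam(G)=diam(\overline{G})=2$ is the main obstacle, and it is exactly where the classical theory of $P_4$-free graphs enters: a graph containing no induced $P_4$ (a \emph{cograph}) always has $G$ or $\overline{G}$ disconnected (Seinsche's theorem). Assuming for contradiction that $G$ is $P_4$-free, I would fix a non-adjacent pair $a,b$ (which exists because $\overline{G}$ has an edge) together with a common neighbor $c$, giving an induced path $a - c - b$, and then try to show that the absence of any induced $P_4$ forces every neighbor of $a$ or of $b$ to be adjacent to every common neighbor of $a$ and $b$; in $\overline{G}$ this isolates the common-neighbor set from the vertices $a,b$ and their private neighbors except possibly through the vertices adjacent to neither $a$ nor $b$, and analyzing those vertices should produce a separation of $\overline{G}$, contradicting its connectedness. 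The honest difficulty is that the bookkeeping in this last step is delicate and is essentially the content of Seinsche's theorem, so in practice I would either invoke that theorem directly or prove it by induction on $|V(G)|$ using the union/join structure of cographs. Once the induced $P_4$ is in hand, the forcing chain of the first paragraph gives $Z(G) \le n-3$, completing the argument; every part except this final combinatorial case is immediate from the induced-path construction.
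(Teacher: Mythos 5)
Your argument is correct, but it follows a genuinely different route from the one the paper gives. The paper's proof is a direct, self-contained case analysis on $\Delta(G)$: it splits into $\Delta(G)=n-2$ and $\Delta(G)=n-a$ with $3\le a\le n-2$, and in each sub-configuration uses the connectedness of $\overline{G}$ to locate a missing edge or non-neighbor, from which it exhibits an explicit set of three white vertices and a forcing chain (e.g.\ $u_2\rightarrow w_k\rightarrow u_1\rightarrow w_{n-2}$). Your proof instead reduces everything to the existence of an induced $P_4$: your observation that an induced $P_k$ yields $Z(G)\le n-k+1$ is correct (it is the contrapositive of Theorem~5.4 of~\cite{AIM}), your handling of the cases $diam(G)\ge 3$ and $diam(\overline{G})\ge 3$ via geodesics and the self-complementarity of $P_4$ is fine, and the residual case is, as you say, exactly Seinsche's theorem on cographs. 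This is precisely the alternative derivation the authors themselves record in the remark immediately following the theorem, citing Theorem~2 of~\cite{CRG}; they chose not to make it the main proof because it is not self-contained. So the trade-off is clear: your route is conceptually cleaner and ties the bound to known structure theory of $P_4$-free graphs, but it outsources the hardest combinatorial step to a classical external result (or to an induction on the union/join decomposition of cographs that you sketch but do not carry out), whereas the paper's degree-based case analysis proves everything from scratch with elementary forcing chains. If you intend your write-up to stand alone, you would need to either supply the inductive proof of Seinsche's theorem or complete the adjacency bookkeeping in your $diam(G)=diam(\overline{G})=2$ case; as submitted, that step is a citation rather than a proof.
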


\begin{proof}
Let $G$ and $\overline{G}$ be connected graphs of order $n \ge 4$. Since $\overline{G}$ is connected, $\Delta(G) \le n-2$. If $\Delta(G)=1$, then $G \cong P_2$, and thus $\Delta(G) \ge 2$. We consider two cases.

\emph{Case 1: $\Delta(G)=n-2$.} Let $V(G)=\{u_1, u_2\} \cup W$, where $W=\{w_i \mid 1 \le i \le n-2\}$. Suppose that $\deg_G(u_1)=\Delta(G)=n-2$ and let $N_G(u_1)=W$. If $u_2w_j \in E(G)$ for each $j$ ($1 \le j \le n-2$), then $G$ contains the complete bi-partite graph $K_{2, n-2}$ as a subgraph, and thus $\overline{G}$ is disconnected. Next, suppose there exists $w_j$ such that $u_2w_j \not\in E(G)$ for some $j$ ($1 \le j \le n-2$). Without loss of generality, we may assume that $N_G(u_2) \cap W=\{w_i \mid 1 \le i \le k\}$ for some $k<n-2$. If $w_iw_j \in E(G)$ for each $i, j$ ($1 \le i \le k$ and $k+1 \le j \le n-2$), then $G$ contains the complete bi-partite graph $K_{k, n-k}$ as a subgraph, and thus $\overline{G}$ is disconnected. So, there exists two vertices $w_x$ and $w_y$ such that $w_xw_y \not\in E(G)$, where $1 \le x \le k$ and $k+1 \le y \le n-2$. Without loss of generality, we may assume that $w_kw_{n-2} \not\in E(G)$, by relabeling if necessary (see (a) of Fig.~\ref{conn}). Then $V(G) \setminus \{u_1, w_k, w_{n-2}\}$ forms a zero forcing set for $G$: $u_2 \rightarrow w_k \rightarrow u_1 \rightarrow w_{n-2}$. Thus, $Z(G) \le n-3$.

\emph{Case 2: $\Delta(G) =n-a$, where $3 \le a \le n-2$.} Let $V(G)=U \cup W$, where $U=\{u_i \mid 1 \le i \le a\}$ and $W=\{w_j \mid 1 \le j \le n-a\}$ for $3 \le a \le n-2$. Let $N_G(u_1)=W$; so that $\deg_G(u_1)=\Delta(G)=n-a$. For $\overline{G}$ to be connected, $G$ can not contain $K_{a, n-a}$ as a subgraph, meaning $u_iw_j \not\in E(G)$ for a pair $(i,j)$ with $1<i \le a$ and $1 \le j \le n-a$. First, suppose there is a $w_k$ such that $u_{\alpha}w_k \in E(G)$ and $u_{\beta}w_k \not\in E(G)$, where $2 \le \alpha, \beta \le a$ (see (b) of Fig.~\ref{conn}). Then $V(G) \setminus \{u_{\alpha}, u_{\beta}, w_k\}$ forms a zero forcing set for $G$, since $u_1 \rightarrow w_{k} \rightarrow u_{\alpha}$ and $v \rightarrow u_{\beta}$ for some vertex $v \in V(G)$ with $vu_{\beta} \in E(G)$; here, we note that such a vertex $v$ exists by the connectedness of $G$. Next, suppose $w_k$ as above does not exist. Then $W=W' \cup W''$, where $W'=\{w \in W \mid N_G(w) \cap U=\{u_1\}\}$ and $W''=\{w \in W \mid N_G(w) \cap U=U\}$. As already noted, $W' \neq \emptyset$. Notice also that $W' \neq W$ (i.e., $W'' \neq \emptyset$), since $N_G(u_1)=W$ and $G$ is connected. If there exist $w_{\alpha}\in W'$ and $w_{\beta} \in W''$ such that $w_{\alpha}w_{\beta} \not\in E(G)$, then $V(G) \setminus \{u_1, u_a, w_{\beta}\}$ is a zero forcing set, since $w_{\alpha} \rightarrow u_1 \rightarrow w_{\beta} \rightarrow u_a$ (see (c) of Fig.~\ref{conn}). If, for all $(w_x, w_y) \in W' \times W''$, $w_xw_y \in E(G)$, then $G$ contains the complete bi-partite graph $K_{|W''|, |W'|+a}$ with bi-partite sets $W''$ and $U \cup W'$, and $\overline{G}$ will not be connected. Thus, in each case, $Z(G) \le n-3$ if both $G$ and $\overline{G}$ are connected.~\hfill
\end{proof}

\begin{figure}[htbp]
\begin{center}
\begin{picture}(0,0)(80,55)
\setlength{\unitlength}{1.3pt}

\put(-60,-20){(a) $w_kw_{n-2} \not\in E(G)$}
\put(25,-20){(b) $u_{\beta}w_k \not\in E(G)$}
\put(108,-20){(c) $w_{\alpha}w_{\beta} \not\in E(G)$}
\put(-37,-1){$\cdots$}
\put(-22,-1){$\cdots$}
\put(42,-1){$\cdots$}
\put(58,-1){$\cdots$}
\put(68,-1){$\cdot$}
\put(122,-1){$\cdots$}
\put(136,-1){$\cdots$}
\put(151,-1){$\cdots$}
\put(161,-1){$\cdot$}
\put(33,29){$\ldots$}
\put(48,29){$\ldots$}
\put(63,29){$\ldots$}
\put(133,29){$\ldots$}

\put(-49,0){\circle{3}}
\put(-53,-6){$w_1$}
\put(-40,0){\circle{3}}
\put(-43,-6){$w_2$}
\put(-25,0){\circle{3}}
\put(-28,-6){$w_k$}
\put(-11,0){\circle{3}}
\put(-14,-6){$w_{n-2}$}
\put(-35,30){\circle{3}}
\put(-38,34){$u_1$}
\put(-20,30){\circle{3}}
\put(-23,34){$u_2$}

\put(29,0){\circle{3}}
\put(25,-6){$w_1$}
\put(39,0){\circle{3}}
\put(35,-6){$w_2$}
\put(55,0){\circle{3}}
\put(51,-6){$w_k$}
\put(75,0){\circle{3}}
\put(71,-6){$w_{n-a}$}
\put(20,30){\circle{3}}
\put(16,34){$u_1$}
\put(30,30){\circle{3}}
\put(26,34){$u_2$}
\put(45,30){\circle{3}}
\put(41,34){$u_{\alpha}$}
\put(60,30){\circle{3}}
\put(56,34){$u_{\beta}$}
\put(75,30){\circle{3}}
\put(71,34){$u_a$}

\put(110,0){\circle{3}}
\put(106,-6){$w_1$}
\put(120,0){\circle{3}}
\put(116,-6){$w_2$}
\put(134,0){\circle{3}}
\put(130,-6){$w_{\alpha}$}
\put(149,0){\circle{3}}
\put(145,-6){$w_{\beta}$}
\put(166,0){\circle{3}}
\put(162,-6){$w_{n-a}$}
\put(110,30){\circle{3}}
\put(106,34){$u_1$}
\put(120,30){\circle{3}}
\put(116,34){$u_2$}
\put(130,30){\circle{3}}
\put(126,34){$u_3$}
\put(145,30){\circle{3}}
\put(141,34){$u_a$}

\put(-35,28.5){\line(-1,-2){13.5}}
\put(-35,28.5){\line(-1,-5){5.5}}
\put(-35,28.5){\line(1,-3){9}}
\put(-35,28.5){\line(5,-6){23}}
\put(-20,28.5){\line(-1,-1){27.5}}
\put(-20,28.5){\line(-3,-4){20}}
\put(-20,28.5){\line(-1,-5){5.5}}

\put(20,28.5){\line(1,-3){9}}
\put(20,28.5){\line(2,-3){18}}
\put(20,28.5){\line(5,-4){34}}
\put(20,28.5){\line(5,-4){23}}
\put(20,28.5){\line(2,-1){54}}
\put(55,1.5){\line(-1,3){9}}

\put(110,28.5){\line(0,-1){27}}
\put(110,28.5){\line(1,-3){9}}
\put(110,28.5){\line(5,-6){23}}
\put(110,28.5){\line(4,-3){37.5}}
\put(110,28.5){\line(2,-1){55}}
\put(120,28.5){\line(1,-1){28}}
\put(120,28.5){\line(5,-3){45}}
\put(130,28.5){\line(2,-3){18}}
\put(130,28.5){\line(4,-3){36}}
\put(145,28.5){\line(1,-6){4.5}}
\put(145,28.5){\line(4,-5){22}}

\end{picture}
\end{center}
\vspace{1in}
\caption{Connected graphs $G$ of order $n \ge 4$ with $2 \le \Delta(G) \le n-2$}\label{conn}
\end{figure}
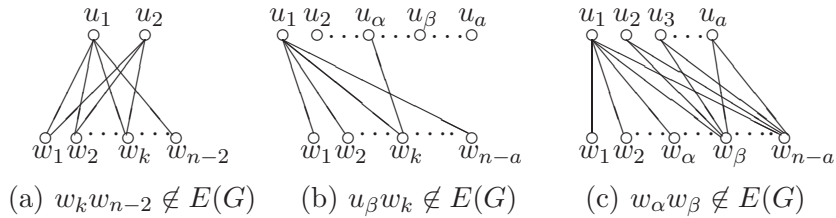

\begin{remark}
The bound obtained in Theorem \ref{connected} cannot be improved. For example, $G= \overline{C_n}$ and $\overline{G}=C_n$ are connected for $n \ge 5$ and $Z(G)\ge \delta(G)=n-3$.
\end{remark}

\begin{remark}
We note that Theorem~\ref{connected} can be also deduced as follows: By Theorem~5.4 of~\cite{AIM}, $Z(G)\geq n-2$ implies $G$ does not contain $P_4$ as an induced subgraph. Thence, $G$ is a \emph{cograph} (i.e., \emph{complement reducible graph}) and an equivalent characterization of a cograph $G$ is this: the complement of any nontrivial connected induced subgraph of $G$ is disconnected (see Theorem 2 of~\cite{CRG} for details). However, the proof of Theorem~\ref{connected} first provided bears the virtue of being simple, direct, and completely self-contained.
\end{remark}

Theorems \ref{mindegree} and \ref{connected} imply a Nordhaus-Gaddum-type result on zero forcing number of graphs as follows.

\begin{corollary}\label{bounds}
Let $G$ and $\overline{G}$ be connected graphs of order $n \ge 4$. Then
$$\delta(G)+n-1-\Delta(G)=\delta(G)+\delta(\overline{G})\le Z(G)+ Z(\overline{G}) \le 2(n-3),$$ and both bounds are sharp.
\end{corollary}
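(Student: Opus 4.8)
The plan is to read off both inequalities immediately from the results already proved and then to spend the real effort on sharpness. The displayed equality is pure bookkeeping: for every vertex $v$ we have $\deg_{\overline{G}}(v)=(n-1)-\deg_G(v)$, so $\delta(\overline{G})=\min_v\big((n-1)-\deg_G(v)\big)=(n-1)-\Delta(G)$, and adding $\delta(G)$ gives $\delta(G)+\delta(\overline{G})=\delta(G)+n-1-\Delta(G)$. For the lower bound, Theorem~\ref{mindegree} applied to $G$ and to $\overline{G}$ yields $Z(G)\ge\delta(G)$ and $Z(\overline{G})\ge\delta(\overline{G})$, and summing gives $Z(G)+Z(\overline{G})\ge\delta(G)+\delta(\overline{G})$. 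For the upper bound, since both $G$ and $\overline{G}$ are connected of order $n\ge4$, Theorem~\ref{connected} gives $Z(G)\le n-3$; because $\overline{\overline{G}}=G$ is connected, the same theorem applies with the roles of $G$ and $\overline{G}$ interchanged to give $Z(\overline{G})\le n-3$, and summing yields $Z(G)+Z(\overline{G})\le2(n-3)$.

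To show the lower bound is sharp I would take $G=P_n$. Here $Z(P_n)=1=\delta(P_n)$ by Theorem~\ref{ObsZ}(a). The complement $\overline{P_n}$ is connected for $n\ge4$, and $\delta(\overline{P_n})=n-1-\Delta(P_n)=n-3$; combining $Z(\overline{P_n})\ge\delta(\overline{P_n})=n-3$ from Theorem~\ref{mindegree} with $Z(\overline{P_n})\le n-3$ from Theorem~\ref{connected} pins down $Z(\overline{P_n})=n-3$. Thus $Z(P_n)+Z(\overline{P_n})=n-2=\delta(P_n)+\delta(\overline{P_n})$, so equality is attained.

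For the upper bound I would exhibit graphs with $Z(G)=Z(\overline{G})=n-3$, and the cleanest witnesses are self-complementary: $P_4$ at $n=4$ (with $\overline{P_4}\cong P_4$ and $Z(P_4)=1$, so the sum is $2=2(n-3)$) and $C_5$ at $n=5$ (with $\overline{C_5}\cong C_5$ and $Z(C_5)=2$, so the sum is $4=2(n-3)$). The main obstacle is obtaining such examples for larger $n$: the cheap route to $Z(G)=n-3$, namely arranging $\delta(G)=n-3$ and quoting Theorem~\ref{connected}, cannot make both $Z(G)$ and $Z(\overline{G})$ equal to $n-3$ once $n\ge6$, because $\delta(G)=n-3$ and $\delta(\overline{G})=n-3$ would force $\Delta(G)=\Delta(\overline{G})=2$, which is impossible when $n-3>2$. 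Hence realizing the upper bound in general requires graphs whose zero forcing number strictly exceeds the minimum-degree lower bound, and I would certify sharpness via the explicit small self-complementary examples above together with the forthcoming characterization of trees and unicyclic graphs meeting $Z(G)+Z(\overline{G})=2(n-3)$.
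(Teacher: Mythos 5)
Your proof is correct and follows the same route as the paper: the two inequalities come from Theorem~\ref{mindegree} plus the degree identity $\delta(\overline{G})=n-1-\Delta(G)$ and from Theorem~\ref{connected} applied to both $G$ and $\overline{G}$. The only difference is cosmetic: for sharpness the paper defers to Sections 3--5 (paths/cycles for the lower bound, the subdivided star and the $C_3$--$S_{n-2}$ vertex sum for the upper bound), whereas you supply the equally valid explicit witnesses $P_n$ and the self-complementary $P_4$, $C_5$ directly.
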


\begin{proof}
Let $G$ and $\overline{G}$ be connected graphs of order $n \ge 4$. Then $1 \le Z(G), Z(\overline{G}) \le n-3$ by Theorem~\ref{connected}, and thus the upper bound follows. The lower bound comes from Theorem~\ref{mindegree} which yields $Z(G)+Z(\overline{G}) \ge \delta(G)+\delta(\overline{G})$, together with the observation that $\delta(\overline{G})+\Delta(G)=n-1$. For the sharpness of the lower bound, refer to section 3. For the sharpness of the upper bound, refer to sections 4 and 5.~\hfill
\end{proof}

\begin{remark} \label{remark} Let $G$ and $\overline{G}$ be connected graphs of order $n \geq 4$. Then
\begin{itemize}
\item[(a)] $Z(G)+Z(\overline{G})=\delta(G)+\delta(\overline{G})$ is equivalent to $Z(G)=\delta(G)$ and $Z(\overline{G})=\delta(\overline{G})$;
\item[(b)] $Z(G)+Z(\overline{G})=2(n-3)$ is equivalent to $Z(G)=n-3=Z(\overline{G})$.
\end{itemize}
\end{remark}

In the rest of this paper, we characterize when $Z(G)+Z(\overline{G})$ achieves the lower bound or the upper bound of Corollary~\ref{bounds} in the case where $G$ is a tree or a unicyclic graph.


\section{Characterization of $Z(G)+Z(\overline{G})=\delta(G)+\delta(\overline{G})$ when $G$ is a tree or a unicyclic graph}

\begin{proposition}
Let $G$ be a graph of order $n$ with $\delta(G)=1$. Then $Z(G)+Z(\overline{G})=\delta(G)+\delta(\overline{G})$ if and only if $G=P_n$, the path on $n \ge 4$ vertices.
\end{proposition}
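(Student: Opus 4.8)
The plan is to reduce the stated equality to two independent conditions via Remark~\ref{remark}(a): since $\delta(G)=1$, the equation $Z(G)+Z(\overline{G})=\delta(G)+\delta(\overline{G})$ holds if and only if $Z(G)=\delta(G)=1$ and $Z(\overline{G})=\delta(\overline{G})$ hold simultaneously. This decouples the problem, and the key leverage is Theorem~\ref{ObsZ}(a), which pins down exactly when the zero forcing number of a connected graph equals one.

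For the forward direction I would argue as follows. Assuming the equality, the first of the two decoupled conditions gives $Z(G)=1$. Since $G$ is connected (as is $\overline{G}$ throughout this section), Theorem~\ref{ObsZ}(a) immediately yields $G=P_n$. The only point requiring care is the order: for $\overline{G}$ to be connected one needs $n\ge 4$, since $\overline{P_2}$ and $\overline{P_3}$ are disconnected; this is why the characterization is stated for $P_n$ with $n\ge 4$.

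For the reverse direction, suppose $G=P_n$ with $n\ge 4$. I would first record the elementary facts $Z(P_n)=1=\delta(P_n)$, the former by Theorem~\ref{ObsZ}(a) and the latter because the endpoints of a path are leaves. Then, using $\deg_{\overline{G}}(v)=n-1-\deg_G(v)$ together with $\Delta(P_n)=2$, I obtain $\delta(\overline{P_n})=n-1-\Delta(P_n)=n-3$. The one substantive step is to evaluate $Z(\overline{P_n})$ exactly, which I would get by a sandwich: Theorem~\ref{mindegree} gives $Z(\overline{P_n})\ge \delta(\overline{P_n})=n-3$, while Theorem~\ref{connected} (applicable since $P_n$ and $\overline{P_n}$ are both connected for $n\ge 4$) gives $Z(\overline{P_n})\le n-3$. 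Hence $Z(\overline{P_n})=n-3=\delta(\overline{P_n})$, and combining the two computed equalities yields $Z(G)+Z(\overline{G})=1+(n-3)=\delta(G)+\delta(\overline{G})$, as required.

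Because the heavy lifting is carried out by the previously established theorems, I do not expect a genuine obstacle here beyond bookkeeping; the most delicate point is simply confirming that $\overline{P_n}$ is connected for $n\ge 4$ (equivalently, that $diam(P_n)\ge 3$), so that Theorem~\ref{connected} may be invoked to force the sharp value $Z(\overline{P_n})=n-3$ rather than merely an inequality.
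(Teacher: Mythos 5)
Your proposal is correct and complete; the paper's own ``proof'' of this proposition is literally ``(Obvious.)'', and the argument you supply --- decoupling via Remark~\ref{remark}(a), invoking Theorem~\ref{ObsZ}(a) to get $G=P_n$ with the $n\ge 4$ proviso coming from connectedness of $\overline{G}$, and then sandwiching $Z(\overline{P_n})$ between $\delta(\overline{P_n})=n-3$ (Theorem~\ref{mindegree}) and $n-3$ (Theorem~\ref{connected}) --- is exactly the intended one using the paper's own machinery. The only point worth flagging is that you should make explicit that the standing hypothesis of this section (both $G$ and $\overline{G}$ connected, $n\ge 4$, as in Corollary~\ref{bounds}) is what licenses the appeal to Remark~\ref{remark}(a); with that noted, there is nothing missing.
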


\begin{proof}
(Obvious.)
\end{proof}

A graph is \emph{unicyclic} if it contains exactly one cycle. Note that a connected graph $G$ is unicyclic if and only if $|E(G)|=|V(G)|$. Next, we consider the case when $G$ is a unicyclic graph.

\begin{theorem}
Let $G$ be a unicyclic graph of order $n$. Then $Z(G)+Z(\overline{G})=\delta(G)+\delta(\overline{G})$ if and only if $G=C_n$, the cycle on $n \ge 5$ vertices.
\end{theorem}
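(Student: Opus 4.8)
The plan is to prove both implications, leaning on the results already assembled in Section~2. For sufficiency, suppose $G = C_n$ with $n \ge 5$. Here $\delta(C_n) = 2$, and since two adjacent black vertices force the remaining vertices one after another around the cycle, $Z(C_n) = 2 = \delta(C_n)$. For the complement, $\overline{C_n}$ is connected when $n \ge 5$, and using $\delta(\overline{G}) + \Delta(G) = n-1$ we get $\delta(\overline{C_n}) = (n-1) - \Delta(C_n) = n-3$. Theorem~\ref{mindegree} gives $Z(\overline{C_n}) \ge n-3$, while Theorem~\ref{connected} gives $Z(\overline{C_n}) \le n-3$; hence $Z(\overline{C_n}) = n-3 = \delta(\overline{C_n})$. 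Adding the two equalities yields $Z(G) + Z(\overline{G}) = \delta(G) + \delta(\overline{G})$.

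For necessity, I would first exploit the structure of unicyclic graphs. A connected unicyclic graph of order $n$ has exactly $n$ edges, so its degree sum is $2n$; therefore $\delta(G) \in \{1,2\}$, and $\delta(G) = 2$ forces every vertex to have degree $2$, i.e. $G$ is $2$-regular and connected, hence $G = C_n$. This dichotomy splits the argument into the case where $G$ has a leaf and the case where $G$ is a cycle.

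In the case $\delta(G) = 1$, I would invoke the preceding Proposition: among all graphs of minimum degree $1$, the equality $Z(G)+Z(\overline{G}) = \delta(G)+\delta(\overline{G})$ holds only for $G = P_n$. Since a path has $n-1 \neq n$ edges it is not unicyclic, so this case contributes no solutions. In the remaining case $G = C_n$, it only remains to pin down the admissible orders. For $n \ge 5$ equality holds by the sufficiency argument above; for the small orders $n = 3$ and $n = 4$ the complement is disconnected ($\overline{C_3}$ is edgeless and $\overline{C_4} = 2K_2$), and a direct computation of $Z$ and $\delta$ on both $G$ and $\overline{G}$ shows the equality fails in each. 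This leaves exactly $G = C_n$ with $n \ge 5$.

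The main obstacle I anticipate lies not in either direction's core computation---both reduce to the sandwich $\delta \le Z \le n-3$ supplied by Theorems~\ref{mindegree} and~\ref{connected}---but in correctly treating the boundary orders $n = 3, 4$, where $\overline{C_n}$ fails to be connected and the Nordhaus--Gaddum machinery of Corollary~\ref{bounds} and Remark~\ref{remark} no longer applies; these must be dispatched by hand. A secondary point requiring care is the clean reduction of the $\delta(G) = 1$ case entirely to the Proposition, which presupposes that the Proposition's characterization remains valid without a connectivity hypothesis on $\overline{G}$.
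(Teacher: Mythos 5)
Your proof is correct and follows essentially the same route as the paper's: both directions reduce to the sandwich $\delta \le Z \le n-3$ together with the facts that $Z(G)=1$ only for paths and that a connected unicyclic graph has $\delta(G)\le 2$. The only cosmetic difference is that you dispatch $n=3,4$ by direct computation on $\overline{C_3}$ and $\overline{C_4}$, whereas the paper notes that $Z(\overline{G})=\delta(\overline{G})$ already forces $\overline{G}$ to be connected; both are fine.
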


\begin{proof}
($\Longrightarrow$) Since $G$ is unicyclic (i.e., $G \neq P_n$), we have $2 \le Z(G) = \delta (G) \le 2$ by Theorem~\ref{ObsZ}(a) and Remark~\ref{remark}(a). Since $\delta(G)=2$, $G$ must be $C_n$, where $n \ge 5$ since $Z(\overline{G})=\delta(\overline{G})$ (implying the connectedness of $\overline{G}$).

($\Longleftarrow$) If $G=C_n$, $n \ge 5$, then since any two adjacent vertices of a cycle form a minimum zero forcing set, $Z(G)=2=\delta(G)$. By Theorems \ref{mindegree} and \ref{connected},  $\delta(\overline{C_n}) \le Z(\overline{C_n}) \le n-3=\delta(\overline{C_n})$. ~\hfill
\end{proof}


\section{Characterization of $Z(G)+Z(\overline{G})=2(n-3)$ when $G$ is a tree}

In this section, we characterize trees $T$ and their complements $\overline{T}$ such that $Z(T)+Z(\overline{T})$ achieves the upper bound of Corollary \ref{bounds}. We first recall the following definitions, which can be found in~\cite{CEJO}.

Fix a graph $G$. A vertex of degree at least three is called a \emph{major vertex}. A leaf $u$ is called \emph{a terminal vertex of a major vertex} $v$ if $d(u, v)<d(u, w)$ for every other major vertex $w$. The \emph{terminal degree} of a major vertex $v$ is the number of terminal vertices of $v$. A major vertex $v$ is an \emph{exterior major vertex} if it has positive terminal degree.

\begin{Obs}(c.f. Prop. 4.4 of~\cite{AIM})\label{obs(-1)}
The presence of long vertex-disjoint path(s) indicates, by the fact that $Z(T)=P(T)$ (Theorem~\ref{pathcover}(b)), an upper bound for $Z(T)$ in terms of the order of $T$. For example, if a tree $T$ of order $n$ contains two vertex-disjoint paths $P^1$ and $P^2$ of lengths $4$ and $3$, then $Z(T)=P(T)\leq n-7$ since there is a path cover for $T$ consisting of $P^1$, $P^2$, and the other $n-9$ vertices, each as a path of length $0$.
\end{Obs}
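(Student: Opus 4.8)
The plan is to exhibit an explicit path cover of $T$ of cardinality $n-7$ and then invoke the identity $Z(T)=P(T)$ from Theorem~\ref{pathcover}(b) to transfer the bound from $P(T)$ to $Z(T)$. The one genuine point that must be checked is that $P^1$ and $P^2$ actually qualify as members of a path cover, i.e., that they occur as \emph{induced} subgraphs of $T$ as the definition of $P(T)$ demands. This is automatic in a tree: if some $P^i$ possessed a chord $v_av_b$, then that chord together with the subpath of $P^i$ joining $v_a$ to $v_b$ would close up a cycle, contradicting the acyclicity of $T$. Hence every path of $T$ -- in particular $P^1$ and $P^2$ -- is induced, and of course every single vertex is trivially an induced path of length $0$.

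First I would record the vertex counts, being careful to convert lengths into numbers of vertices: a path of length $4$ has $5$ vertices and a path of length $3$ has $4$ vertices, so $P^1$ and $P^2$ together occupy $5+4=9$ vertices of $T$. Since $P^1$ and $P^2$ are vertex-disjoint, the complementary set $R=V(T)\setminus\bigl(V(P^1)\cup V(P^2)\bigr)$ has exactly $n-9$ vertices. Then I would assemble the candidate cover consisting of $P^1$, $P^2$, and each vertex of $R$ regarded as a length-$0$ path. By construction these paths are pairwise vertex-disjoint, by the observation above they are induced subgraphs, and their union is all of $V(T)$, so they constitute a path cover of $T$. Its size is $2+|R|=2+(n-9)=n-7$, which gives $P(T)\leq n-7$, and applying $Z(T)=P(T)$ yields $Z(T)\leq n-7$, as claimed.

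The same bookkeeping delivers the general principle alluded to in the statement: $k$ pairwise vertex-disjoint induced paths on $n_1,\dots,n_k$ vertices, supplemented by the remaining vertices as singletons, form a path cover of size $n-\sum_{i=1}^{k}(n_i-1)$, so that $Z(T)=P(T)\leq n-\sum_{i=1}^{k}(n_i-1)$ and longer paths furnish sharper upper bounds. I do not anticipate any real obstacle, since the argument reduces to a single explicit construction and a vertex count; the only place that demands attention is the distinction between the \emph{length} of a path and its \emph{number of vertices}, where an off-by-one slip would corrupt the final estimate.
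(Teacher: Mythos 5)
Your proposal is correct and follows the same argument the paper gives inside the observation itself: count the $5+4=9$ vertices covered by $P^1$ and $P^2$, take the remaining $n-9$ vertices as singleton paths, and invoke $Z(T)=P(T)$. Your added check that any path in a tree is automatically an induced subgraph is a worthwhile detail the paper leaves implicit, but it does not change the approach.
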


\begin{Obs}\label{observation}
Let $G$ be a graph of order $n \ge 4$.
\begin{itemize}
\item[(a)] If $G$ is $P_n$, then $Z(G)+Z(\overline{G}) = 2(n-3)$ if and only if $n=4$.
\item[(b)] If $G$ is $C_n$, then $Z(G)+Z(\overline{G}) = 2(n-3)$ if and only if $n=5$.
\end{itemize}
\end{Obs}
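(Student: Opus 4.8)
The plan is to reduce both parts to the clean equivalence already recorded in Remark~\ref{remark}(b): for connected $G$ and $\overline{G}$ of order $n \ge 4$, the equality $Z(G)+Z(\overline{G})=2(n-3)$ holds if and only if $Z(G)=n-3=Z(\overline{G})$. The two relevant zero forcing numbers are pinned down by results already in hand, namely $Z(P_n)=1$ by Theorem~\ref{ObsZ}(a) and $Z(C_n)=2$ (any two adjacent vertices of a cycle form a minimum zero forcing set, as noted in the previous section). With these values fixed, each direction of each biconditional collapses to a short arithmetic comparison together with a verification that the complement is as expected. Before applying Remark~\ref{remark}(b), I would first confirm connectedness of the complements: since $\mathrm{diam}(P_n)=n-1\ge 3$ for $n\ge 4$, the complement $\overline{P_n}$ is connected for all $n\ge 4$, and $\overline{C_n}$ is connected for $n\ge 5$.

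For part (a), with $\overline{P_n}$ connected ($n\ge 4$), Remark~\ref{remark}(b) applies and the equality forces $Z(P_n)=n-3$. But $Z(P_n)=1$, so $n-3=1$, giving $n=4$. For the converse I would record the self-complementarity $\overline{P_4}\cong P_4$ (on vertices $1\!-\!2\!-\!3\!-\!4$ the non-edges $13,14,24$ again form a path), so that $Z(P_4)=Z(\overline{P_4})=1=n-3$ and hence $Z(P_4)+Z(\overline{P_4})=2=2(n-3)$. For part (b), the same reduction works whenever $\overline{C_n}$ is connected, i.e.\ for $n\ge 5$: the equality forces $Z(C_n)=n-3$, and since $Z(C_n)=2$ we obtain $n=5$, the converse following from the self-complementarity $\overline{C_5}\cong C_5$, which yields $Z(C_5)=Z(\overline{C_5})=2=n-3$.

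The one case that escapes this reduction is the instance $n=4$ of part (b), and I expect this to be the only genuinely delicate point. Here $\overline{C_4}\cong 2K_2$ is \emph{disconnected}, so Remark~\ref{remark}(b) is unavailable; I would therefore treat it by direct computation, using that the zero forcing number of a disjoint union is the sum of the zero forcing numbers of the components. This gives $Z(\overline{C_4})=Z(2K_2)=2$, so that $Z(C_4)+Z(\overline{C_4})=2+2=4\ne 2=2(n-3)$, excluding $n=4$. Everything else is bookkeeping on the known values of $Z(P_n)$ and $Z(C_n)$ together with the two self-complementary identities $\overline{P_4}\cong P_4$ and $\overline{C_5}\cong C_5$, so the main care required is simply to remember to dispatch the disconnected-complement case separately from the generic argument.
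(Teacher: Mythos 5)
Your proposal is correct. The paper states this as an Observation and supplies no proof, so there is nothing to diverge from: your verification via Remark~\ref{remark}(b) together with the known values $Z(P_n)=1$, $Z(C_n)=2$, the self-complementarity of $P_4$ and $C_5$, and the separate direct computation for the disconnected $\overline{C_4}\cong 2K_2$ (where the Remark does not apply) supplies exactly the routine details the authors left implicit.
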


\begin{lemma}\label{GK}
Let a graph $G$ contain as a subgraph the complete graph $K_m$ on $m\geq 2$ vertices. Then $Z(G)\geq Z(K_m)=m-1$.
\end{lemma}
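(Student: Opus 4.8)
The plan is to reduce the lemma to a counting argument over the forcing chains of a minimum zero forcing set. First, since $Z(K_m)=m-1$ is immediate from Theorem~\ref{ObsZ}(b), it suffices to prove $Z(G)\ge m-1$. I would let $W\subseteq V(G)$ with $|W|=m$ denote the vertex set inducing the given $K_m$, and fix a minimum zero forcing set $S$ together with one resulting forcing process. As in the proof of Theorem~\ref{pathcover}(a), this process decomposes $V(G)$ into exactly $|S|=Z(G)$ vertex-disjoint forcing chains, each starting at a distinct vertex of $S$. I would first record the standard fact that each chain is an \emph{induced} path: if a chain $v_1\to v_2\to\cdots$ had a chord $v_iv_j$ with $j\ge i+2$, then at the step $v_i\to v_{i+1}$ the vertex $v_j$ would still be white, contradicting that $v_{i+1}$ is the only white neighbor of $v_i$.

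Because $W$ induces a clique while each chain is an induced path, no chain can contain three vertices of $W$ (three pairwise-adjacent vertices cannot lie on an induced path). So I would classify the $|S|$ chains according to whether they meet $W$ in $0$, $1$, or $2$ vertices, with $t_0$, $t_1$, $t_2$ chains of each kind. Counting the vertices of $W$ gives $t_1+2t_2=m$, whence
$$Z(G)=t_0+t_1+t_2\ \ge\ t_1+t_2\ =\ m-t_2.$$
Thus the entire lemma comes down to the single inequality $t_2\le 1$.

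This inequality is the crux, and is where I expect the real work to lie. In a chain meeting $W$ in two vertices $p,q$, those vertices are adjacent (clique), so by inducedness they must be \emph{consecutive} on the chain, giving a force $p\to q$ with $p\in W$. Since $p$ is adjacent to every other vertex of $W$, at the moment $p$ forces $q$ every vertex of $W\setminus\{q\}$ must already be black. Now if there were two such chains, with within-clique forces $p\to q$ and $p'\to q'$ (so $q\ne q'$ and both $q,q'\notin S$), then just before $q$ is forced $q'$ is already black, forcing $q'$ strictly earlier than $q$; the symmetric argument forces $q$ strictly earlier than $q'$, a contradiction. Hence $t_2\le 1$, and therefore $Z(G)\ge m-1=Z(K_m)$.

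The only points needing genuine care are the legitimacy of the decomposition into $|S|$ vertex-disjoint induced paths (which I would cite from Theorem~\ref{pathcover}(a)) and the time-ordering bookkeeping in the $t_2\le 1$ step; the counting that follows is then routine.
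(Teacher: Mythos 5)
Your proof is correct. It rests on the same basic obstruction as the paper's argument --- a black vertex of the clique cannot perform a force while it still has two white neighbors inside the clique --- but you package it quite differently. The paper argues by contradiction: assuming at most $m-2$ forcing chains, it truncates each chain at its first vertex in the fixed $K_m$, notes that at most $m-2$ vertices of the clique are then black while at least two are white, and concludes that every black clique vertex has at least two white clique neighbors, so the process stalls. You instead invoke the full chain decomposition behind Theorem~\ref{pathcover}(a) (the partition of $V(G)$ into $|S|$ vertex-disjoint induced paths), observe that an induced path meets a clique in at most two vertices, and reduce the lemma to the single claim $t_2\le 1$, which you establish by a timing argument on the two within-clique forces $p\to q$ and $p'\to q'$. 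Your route is slightly longer but more explicit and, in one respect, more complete: the $t_2\le 1$ step cleanly handles the possibility of a chain leaving the clique and re-entering it later (a force $b\to w$ with $b\notin W$, $w\in W$), a case the paper's truncation-at-first-entry argument passes over in silence. The cost is that you must carry the path-cover machinery (vertex-disjoint chains, inducedness, the chronological ordering of forces), all of which you justify or cite correctly; the counting $Z(G)=t_0+t_1+t_2\ge m-t_2\ge m-1$ then closes the argument.
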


\begin{proof}
Let $H$ be a fixed $K_m$ in $G$. Let $u_{i,1} \rightarrow u_{i,2} \rightarrow \ldots \rightarrow u_{i,s(i)}$, where $1\leq i\leq m-2$, be $m-2$ forcing chains where $u_{i,1}$ is the initial black vertex of the $i$-th chain and $u_{i,s(i)}$ is the first vertex of the $i$-th chain in $H$. Since there are at most $m-2$ black vertices in $H$, none of the two or more white vertices of $H$ will be forced black: each of the ($m-2$) or fewer black vertices of $H$ has at least 2 white neighbors in $H$.~\hfill
\end{proof}

\begin{remark} For a graph $G$, let $M(G)$ be the maximum nullity of the associated matrices of $G$, $\omega(G)$ the clique number of $G$, and $h(G)$ the Hadwiger number of $G$.
\begin{itemize}
\item[(a)] It is shown that $Z(G) \ge M(G)$ (\cite{AIM}) and that $M(G) \ge \omega(G)-1$ (\cite{ref1}), and thus implying Lemma \ref{GK}.
\item[(b)] It is shown in \cite{ref2} that $M(G) \ge h(G)-1$, which implies Lemma \ref{GK} since $h(G) \ge \omega(G)$.
\end{itemize}
\end{remark}

\begin{theorem} \label{T+}
Let $G$ be a tree of order $n\geq 5$. If $Z(G)=n-3$, then $G$ is the graph obtained by subdividing one edge of the star $S_{n-1}=K_{1, n-2}$.
\end{theorem}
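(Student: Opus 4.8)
The plan is to pass from the zero forcing number to the path cover number via Theorem~\ref{pathcover}(b), which gives $Z(T)=P(T)=n-3$ because $T$ is a tree. The crucial reformulation is the edge-counting idea implicit in Observation~\ref{obs(-1)}: a path cover of $T$ consisting of $k$ vertex-disjoint induced paths covers all $n$ vertices while using exactly $n-k$ edges of $T$, so minimizing the number of paths is the same as maximizing the number of edges used, i.e. choosing a spanning union of vertex-disjoint paths with as many edges as possible. Writing $L(T)$ for this maximum total number of edges (equivalently, the maximum of $\sum_i \ell_i$ over all families of vertex-disjoint paths of lengths $\ell_1,\dots,\ell_k$ in $T$), we get $P(T)=n-L(T)$, so the hypothesis $Z(T)=n-3$ is \emph{equivalent} to $L(T)=3$. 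In particular, every family of vertex-disjoint paths in $T$ has total length at most $3$, while some family attains $3$.

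From $L(T)=3$ I would read off the global shape of $T$. First, a single path of length $\ge 4$ would already give $L(T)\ge 4$, so $\mathrm{diam}(T)\le 3$; and since $n\ge 5$, a tree on $n$ vertices has $\mathrm{diam}(T)\ge 2$. Hence $\mathrm{diam}(T)\in\{2,3\}$. A tree of diameter $2$ is a star $K_{1,n-1}$, and a tree of diameter $3$ is a double star with two adjacent centers $a,b$ carrying $p\ge 1$ and $q\ge 1$ pendant leaves respectively, where $n=p+q+2$.

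To finish, I would eliminate every candidate except the claimed graph. If $T=K_{1,n-1}$, then the longest family of vertex-disjoint paths is a single path $\text{leaf}\,$--$\,\text{center}\,$--$\,\text{leaf}$ of length $2$, so $L(T)=2$ and $P(T)=n-2\neq n-3$, a contradiction; thus $\mathrm{diam}(T)=3$ and $T$ is a double star. If both $p\ge 2$ and $q\ge 2$, then $\text{leaf}\,$--$\,a\,$--$\,\text{leaf}$ and $\text{leaf}\,$--$\,b\,$--$\,\text{leaf}$ are two vertex-disjoint paths of total length $4>3$, again contradicting $L(T)=3$. Therefore $\min\{p,q\}=1$; say $q=1$. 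Since $n\ge 5$ forces $p=n-3\ge 2$, the vertex $a$ has degree $p+1=n-2$ and is adjacent to $n-3$ leaves together with $b$, while $b$ is adjacent only to $a$ and to its single leaf. This is exactly the tree obtained by subdividing one edge of the star $S_{n-1}=K_{1,n-2}$ centered at $a$ (with $b$ the inserted vertex), as claimed.

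The main obstacle is the first paragraph: making the passage $P(T)=n-L(T)$ and the ``every family has total length at most $3$'' reformulation fully rigorous, since this is precisely what converts the numerical hypothesis $Z(T)=n-3$ into usable local structure. Once that is established, the diameter bound and the double-star analysis are routine, the only delicate point being the boundary case $p=1$, which yields $P_4$ and is ruled out by the order hypothesis $n\ge 5$.
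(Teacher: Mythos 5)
Your proof is correct, and it rests on the same foundation as the paper's: Theorem~\ref{pathcover}(b) converts the hypothesis into the statement $P(T)=n-3$, and vertex-disjoint paths of large total length force $P(T)$ down (the content of Observation~\ref{obs(-1)}). Where you genuinely diverge is in the structural case analysis. The paper splits on the number of major vertices: if there are two or more, it produces two exterior major vertices of terminal degree at least two, hence two disjoint copies of $P_3$ and $P(T)\le n-4$; so $T$ is a spider with a single major vertex $v$, and the identity $P(T)=\deg_G(v)-1$ pins down $\deg_G(v)=n-2$. You instead make the identity $P(T)=n-L(T)$ explicit, where $L(T)$ is the maximum total length of a family of vertex-disjoint paths (legitimate in a tree, since every path subgraph is induced and any disjoint family extends to a spanning one by adjoining singletons without changing the edge count), read off $\mathrm{diam}(T)\le 3$ from $L(T)=3$, and classify: the star is excluded since $L=2$, and a double star with at least two leaves on each center is excluded since $L\ge 4$. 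Your route buys a self-contained argument that avoids the two facts the paper asserts without proof (that a tree with two major vertices has two exterior major vertices of terminal degree at least two, and that a spider with center $v$ has path cover number $\deg(v)-1$); the paper's route is the one that transfers to the unicyclic case in Section~5, where the diameter alone does not determine the structure. The step you flag as delicate is indeed fine: a cover by $k$ disjoint paths uses exactly $n-k$ edges, so minimizing $k$ over covers is maximizing total edge count over disjoint families, and the boundary case $p=1$ (yielding $P_4$) is correctly discarded by $n\ge 5$.
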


\begin{proof}
Let $G$ be a tree of order $n\geq 5$. Assume $Z(G)=n-3$. If $G=P_n$, $n \ge 5$, then $Z(G)< n-3$ by Theorem~\ref{ObsZ}(a). If $G$ contains at least two major vertices, $G$ must contain at least two exterior major vertices, say $v_1$ and $v_2$, each with terminal degree at least two. Let $N_G(v_1) \supset \{x_1, x_2\}$ and $N_G(v_2) \supset \{x_3, x_4\}$ such that each $x_i$, $i \in \{1,2,3,4\}$, is not on the $v_1-v_2$ geodesic. Then $x_1,v_1,x_2$ and $x_3, v_2, x_4$ are vertex-disjoint paths in $G$; thus $Z(G) =P(G) \le n-4$. So, $G$ must have exactly one major vertex, say $v$, and $n-3=Z(G)=P(G)=\deg_G(v)-1$ implies $\deg_G(v)=n-2$. Thus $G$ is the graph obtained by subdividing one edge of the star $S_{n-1}$.~\hfill
\end{proof}

\begin{corollary}\label{T+CT}
Let $G$ be a tree of order $n\geq 5$ with a connected $\overline{G}$. Then $Z(G)+Z(\overline{G})=2(n-3)$ if and only if $G$ is the graph obtained by subdividing one edge of the star $S_{n-1}$.
\end{corollary}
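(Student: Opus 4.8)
The plan is to prove Corollary~\ref{T+CT} as a biconditional by leveraging Theorem~\ref{T+} for one direction and a direct computation for the converse. Recall from Remark~\ref{remark}(b) that for connected $G$ and $\overline{G}$ of order $n$, the condition $Z(G)+Z(\overline{G})=2(n-3)$ is equivalent to the conjunction $Z(G)=n-3$ \emph{and} $Z(\overline{G})=n-3$. This reformulation is the key organizing principle, so I would state it at the outset.

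For the forward direction, assume $Z(G)+Z(\overline{G})=2(n-3)$. By Remark~\ref{remark}(b) this forces $Z(G)=n-3$. Since $G$ is a tree of order $n\geq 5$, Theorem~\ref{T+} applies directly and tells us that $G$ must be the graph obtained by subdividing one edge of the star $S_{n-1}=K_{1,n-2}$. This half is therefore essentially immediate once the reformulation is invoked; no further work is needed because Theorem~\ref{T+} has already done the structural analysis of trees with $Z(G)=n-3$.

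The converse is where the real work lies, and I expect it to be the main obstacle. Suppose $G$ is the subdivided star described above; call its vertices a central major vertex $v$ of degree $n-2$, the $n-3$ leaves adjacent to $v$, one subdivision vertex $w$ adjacent to $v$, and the pendant leaf $\ell$ adjacent to $w$. I must verify two things: that $Z(G)=n-3$ and that $Z(\overline{G})=n-3$, after first checking that $\overline{G}$ is connected (which is given as a hypothesis, so I may assume it). The value $Z(G)=n-3$ follows from Theorem~\ref{pathcover}(b), since $Z(G)=P(G)$ and the subdivided star admits a path cover by one path through $\ell, w, v$ and one leaf, leaving the remaining $n-4$ leaves as singleton paths, giving $P(G)=1+(n-4)=n-3$; one checks no smaller cover exists because the $n-3$ mutually nonadjacent leaves at $v$ force at least $n-3$ paths. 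The harder computation is $Z(\overline{G})=n-3$: I would analyze the complement explicitly. In $\overline{G}$ the $n-3$ original leaves of $G$ become mutually adjacent and each is adjacent to $w$ but not to $v$, so together with $w$ they span a clique $K_{n-2}$, whence Lemma~\ref{GK} gives $Z(\overline{G})\geq (n-2)-1=n-3$. The matching upper bound $Z(\overline{G})\leq n-3$ comes from Theorem~\ref{connected}, since $G$ and $\overline{G}$ are both connected. Combining, $Z(\overline{G})=n-3$.

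Thus both $Z(G)=n-3$ and $Z(\overline{G})=n-3$, and Remark~\ref{remark}(b) yields $Z(G)+Z(\overline{G})=2(n-3)$, completing the converse. The only genuinely delicate point is identifying the clique $K_{n-2}$ inside $\overline{G}$ correctly, so I would take care to describe the adjacencies in the complement precisely before applying Lemma~\ref{GK}; everything else is a routine invocation of results already established in the excerpt.
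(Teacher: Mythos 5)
Your proposal is correct and takes essentially the same route as the paper: the forward direction is Remark~\ref{remark}(b) combined with Theorem~\ref{T+}, and the converse rests on locating a $K_{n-2}$ in $\overline{G}$ (you use the $n-3$ leaves at $v$ together with the subdivision vertex, while the paper uses all $n-2$ leaves of $G$ --- both are valid) so that Lemma~\ref{GK} and Theorem~\ref{connected} pin down $Z(\overline{G})=n-3$. The one imprecision is your lower-bound justification for $P(G)$: pairwise nonadjacent leaves do not each require their own path, since $\ell_i, v, \ell_j$ is an induced path containing two of them; the bound $P(G)\ge n-3$ is nevertheless true (any family of vertex-disjoint induced paths in this tree uses at most three edges in total), so the conclusion stands.
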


\begin{proof}
($\Longrightarrow$) It follows from Theorem \ref{T+}.

($\Longleftarrow$) Let $V(G)=\{v, s, \ell_1, \ell_2, \ldots, \ell_{n-2}\}$ such that $\deg_G(v)=n-2 \ge 3$, $\deg_G(s)=2$, and $\deg_G(\ell_i)=1$, $i \in \{1, 2, \ldots, n-2\}$, with $s\ell_1 \in E(G)$ (see Fig.~\ref{TC}). Then, by Theorem~\ref{pathcover}(b), $Z(G)=P(G)=n-3$; $S=\{\ell_i \mid 2 \le i \le n-2\}$ is a zero forcing set for $G$, since $\ell_{n-2} \rightarrow v \rightarrow s \rightarrow \ell_1$. Next, we note that $Z(\overline{G})=n-3$: (i) $Z(\overline{G}) \ge n-3$ by Lemma~\ref{GK}, since $\overline{G}[\{\ell_i \mid 1 \le i \le n-2\}] \cong K_{n-2}$; (ii) $Z(\overline{G}) \le n-3$ by Theorem \ref{connected}.~\hfill
\end{proof}

\vskip .15in

\begin{figure}[htbp]
\begin{center}
\begin{picture}(0,0)(60,55)
\setlength{\unitlength}{1.3pt}

\put(-1,54){$T$}
\put(79,54){$\overline{T}$}

\put(0,40.5){\circle{3}}
\put(-1,43){$v$}
\put(0,0){\circle*{3}}
\put(-1,-8){${\ell}_3$}
\put(-20,20){\circle{3}}
\put(-26,19){$s$}
\put(20,20){\circle*{3}}
\put(23,19){${\ell}_5$}
\put(-13,34){\circle{3}}
\put(-22,34){${\ell}_1$}
\put(-13,7){\circle*{3}}
\put(-22,5){${\ell}_2$}
\put(13,7){\circle*{3}}
\put(15,4){${\ell}_4$}

\put(80,40.5){\circle*{3}}
\put(79,43){$v$}
\put(80,0){\circle*{3}}
\put(79,-8){${\ell}_3$}
\put(60,20){\circle{3}}
\put(54,19){$s$}
\put(100,20){\circle{3}}
\put(103,19){${\ell}_5$}
\put(67,33){\circle{3}}
\put(58,33){${\ell}_1$}
\put(67,7){\circle*{3}}
\put(58,5){${\ell}_2$}
\put(93,7){\circle*{3}}
\put(95,4){${\ell}_4$}

\put(0,39){\line(0,-1){40}}
\put(0,39){\line(1,-1){18.5}}
\put(0,39){\line(-1,-1){18.5}}
\put(0,39){\line(-2,-5){13}}
\put(0,39){\line(2,-5){13}}
\put(-19.5,21.5){\line(1,2){5.4}}

\put(68.5, 33){\line(3,2){12}}
\put(67.5,31.5){\line(0,-1){25}}
\put(67.5,31.5){\line(1,-1){26}}
\put(67.5,31.5){\line(2,-5){12}}
\put(67.5,31.5){\line(3,-1){31}}
\put(61.5,20){\line(1,0){37}}
\put(61.5,20){\line(1,-2){6}}
\put(61.5,20){\line(5,-2){32}}
\put(61.5,20){\line(1,-1){18}}
\put(67,7){\line(5,2){31.5}}
\put(67,7){\line(1,0){25}}
\put(67,7){\line(2,-1){12}}
\put(80,0){\line(1,1){19}}
\put(80,0){\line(2,1){13}}
\put(93,7){\line(1,2){6}}

\end{picture}
\end{center}
\vspace{.9in}
\caption{The tree $T$ of order $n=7$ satisfying $Z(T)+Z(\overline{T})=2(n-3)$}\label{TC}
\end{figure}
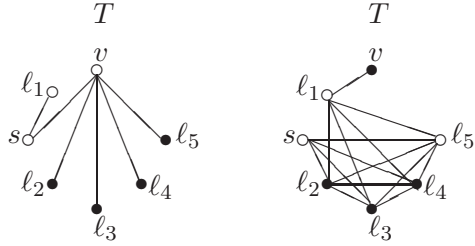


\section{Characterization of $Z(G)+Z(\overline{G})=2(n-3)$ when $G$ is a unicyclic graph}

In this section, we characterize a unicyclic graph $G$ having a connected $\overline{G}$ such that $Z(G)+Z(\overline{G})$ achieves the upper bound of Corollary~\ref{bounds}.

\begin{lemma}\label{5}
Let $G$ and $\overline{G}$ be connected graphs of order 5. If $G$ is a unicyclic graph, then $Z(G)=2=Z(\overline{G})$.
\end{lemma}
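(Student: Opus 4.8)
The plan is to enumerate the connected unicyclic graphs $G$ on $5$ vertices whose complement $\overline{G}$ is also connected, and then compute $Z(G)$ and $Z(\overline{G})$ for each. A connected unicyclic graph on $5$ vertices has exactly $5$ edges and contains a unique cycle $C_k$ with $3 \le k \le 5$. By Theorem~\ref{connected} we already know $Z(G) \le n-3 = 2$ and $Z(\overline{G}) \le 2$, so the entire burden is to show neither number can drop to $1$. By Theorem~\ref{ObsZ}(a), $Z(H)=1$ if and only if $H=P_5$; thus it suffices to verify that in each admissible case neither $G$ nor $\overline{G}$ is a path, whence $Z(G) \ge 2$ and $Z(\overline{G}) \ge 2$, forcing both to equal $2$.

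First I would dispose of the connectedness-of-complement constraint, which sharply limits the candidates. If $G$ contains a vertex of degree $4$ (a dominating vertex), then $\overline{G}$ has an isolated vertex and is disconnected; so $\Delta(G) \le 3$. Since $G$ has a cycle, $G$ is not a tree and in particular $G \neq P_5$, giving $Z(G) \ge 2$ immediately. The cycle length $k$ then organizes the case analysis: if $k=5$ then $G = C_5$, which is self-complementary, so $\overline{G} = C_5$ and $Z(\overline{G}) = 2$; if $k=4$ then $G$ is $C_4$ with one pendant edge attached; if $k=3$ then $G$ is $C_3$ with two additional edges forming trees hanging off the triangle (either a single path of length $2$ from one triangle vertex, or two pendant edges on one or two triangle vertices).

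For each surviving $G$ I would exhibit $\overline{G}$ explicitly and check that it is neither a path nor disconnected. The key structural fact to invoke is Theorem~\ref{ObsZ}(a): proving $Z(\overline{G}) \ge 2$ reduces entirely to verifying $\overline{G} \neq P_5$, which can be read off from the degree sequence of $\overline{G}$ (a path has degree sequence $1,1,2,2,2$, and on $5$ vertices $\deg_{\overline{G}}(v) = 4 - \deg_G(v)$, so I can compute the complementary degree sequences and rule out the path pattern). In each case the complement will have either a vertex of degree $3$ or a degree sequence inconsistent with $P_5$, so $Z(\overline{G}) \ge 2$, and combined with $Z(\overline{G}) \le 2$ from Theorem~\ref{connected} we conclude $Z(\overline{G}) = 2$.

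The main obstacle I anticipate is bookkeeping rather than depth: correctly enumerating the non-isomorphic connected unicyclic graphs on $5$ vertices and, for each, correctly forming the complement and confirming it is connected. There are only a handful of such $G$ (the cases $k=3,4,5$ give roughly four or five isomorphism types once $\Delta(G)\le 3$ is imposed), so the risk is an omitted or duplicated case rather than a hard estimate. A clean way to package the argument, avoiding a verbose case list, is to note that both $G$ and $\overline{G}$ are connected and $G\neq P_5$ (having a cycle), so by Theorem~\ref{ObsZ}(a) $Z(G)\ge 2$; then observe that $\overline{G}$ likewise fails to be a path in every case, so $Z(\overline{G})\ge 2$, and the upper bounds from Theorem~\ref{connected} finish the proof. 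I would present the complement-is-not-a-path verification as the crux, possibly aided by the figure convention already used in the paper, keeping the explicit case check as brief as correctness allows.
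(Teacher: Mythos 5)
Your proposal is correct and follows essentially the same route as the paper: the upper bounds $Z(G),Z(\overline{G})\le 2$ come from Theorem~\ref{connected}, and the lower bounds come from Theorem~\ref{ObsZ}(a) by checking that neither $G$ nor $\overline{G}$ is $P_5$. The only difference is that the paper dispenses with your case enumeration for $\overline{G}$ by a one-line count --- a unicyclic graph on $5$ vertices has $5$ edges, hence so does its complement in $K_5$, whereas $P_5$ has only $4$ --- so the degree-sequence bookkeeping you anticipate is avoidable.
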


\begin{proof}
Since $G$ is not a path and $\overline{G}$ needs to be connected, by Theorems~\ref{ObsZ}(a) and \ref{connected}, $Z(G)=2$. Since $\overline{G}$, the complement of a unicyclic graph in $K_5$, can not be a path and its complement (namely $G$) is connected, again by Theorem~\ref{connected}, $Z(\overline{G})=2$.~\hfill
\end{proof}

\begin{theorem}\label{main}
Let $G$ be a connected, unicyclic graph of order $n\geq 6$ and having a connected $\overline{G}$. Then $Z(G)=n-3$ if and only if $G$ is the vertex sum of $C_3$ and $S_{n-2}$ at one of the leaves of the star.
\end{theorem}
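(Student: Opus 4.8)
The plan is to prove both directions by combining the path-cover characterization $Z(G)=P(G)$ for unicyclic graphs (Theorem~\ref{pathcover}(c)) with the clique lower bound (Lemma~\ref{GK}) applied to the complement. First I would analyze the structure of a unicyclic graph $G$ with $Z(G)=P(G)=n-3$. Writing $G$ as its unique cycle $C$ of length $g$ together with trees hanging off the cycle vertices, I would argue that $n-3$ is a large path cover number, which forces the ``tree part'' of $G$ to be very small. The strategy mirrors the proof of Theorem~\ref{T+}: the presence of two vertex-disjoint paths of total length too large would push $P(G)$ strictly below $n-3$, so $G$ can admit essentially only one ``branching'' location. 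I would show that the cycle must be a triangle $C_3$ (a longer cycle $C_g$ with $g\ge 4$ already contributes a long induced path, lowering $P(G)$ too much, as in Observation~\ref{observation}(b)), and that all the non-cycle vertices must be leaves attached to a single cycle vertex, forcing $G$ to be the vertex sum of $C_3$ and the star $S_{n-2}$ at one of the star's leaves.

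For the forward direction the key steps, in order, would be: (1) rule out $G=C_n$ for $n\ge 6$ using Observation~\ref{observation}(b), so the tree part is nonempty; (2) show any exterior major vertex off the cycle, or two distinct attachment points carrying branches, yields enough vertex-disjoint paths to get $P(G)\le n-4$, contradicting $Z(G)=n-3$; (3) conclude there is exactly one attachment vertex $v$ on the cycle bearing all pendant structure, and that this pendant structure is a star (no subdivided edges, since a path of length $\ge 2$ hanging off again creates an extra disjoint path); (4) show the cycle is $C_3$ by counting: a $4$-cycle or longer gives a path cover saving at least one more vertex. Each of these steps is a path-cover counting argument of the same flavor as Theorem~\ref{T+}, so I expect them to be routine once set up carefully.

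For the reverse direction, suppose $G$ is the vertex sum of $C_3$ and $S_{n-2}$ at a leaf of the star. Let the cycle be $v,a,b$ where $v$ is also the center's neighbor that is the identified leaf, and let $v$ carry the $n-4$ remaining leaves; I would exhibit an explicit minimum path cover of size $n-3$ (one path threading through the triangle and one leaf, with the remaining $n-4$ leaves as trivial paths, then adjust to match $P(G)=n-3$), giving $Z(G)=P(G)=n-3$ via Theorem~\ref{pathcover}(c). For the upper bound $Z(G)\le n-3$ I would invoke Theorem~\ref{connected} directly, since both $G$ and $\overline{G}$ are connected. The lower bound $Z(G)\ge n-3$ should follow from Lemma~\ref{GK}: I expect $\overline{G}$ to contain a large clique (the $n-4$ star-leaves, being mutually nonadjacent in $G$, form a clique in $\overline{G}$, possibly enlarged by cycle vertices), forcing $Z(\overline{G})\ge n-3$, and symmetrically a matching argument in $G$.

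The main obstacle I anticipate is the forward direction's case analysis, specifically ensuring that the path-cover bound $P(G)\le n-4$ really does follow whenever $G$ deviates from the claimed extremal shape; one must be careful that the vertex-disjoint paths exhibited are induced subgraphs and genuinely disjoint, and that subdivisions on the cycle or extra branches each independently lower $P(G)$. A secondary subtlety is correctly identifying the clique in $\overline{G}$ for the reverse direction's lower bound and verifying it has exactly $n-3$ vertices (not fewer), so that Lemma~\ref{GK} yields precisely $Z(\overline{G})\ge n-3$ to match the upper bound from Theorem~\ref{connected}. Once the extremal structure is pinned down, these verifications should be short.
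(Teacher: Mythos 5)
Your overall strategy for the forward direction (bound the path cover number by exhibiting vertex-disjoint induced paths whenever $G$ deviates from the extremal shape) is the same as the paper's, but your structural conclusion is wrong in a way that matters. You assert that in the extremal graph ``all the non-cycle vertices must be leaves attached to a single cycle vertex'' and that the pendant structure has ``no subdivided edges.'' That graph is the vertex sum of $C_3$ and $S_{n-2}$ at the \emph{center} of the star; it has $\Delta=n-1$, so its complement is disconnected and it is excluded by hypothesis. The actual extremal graph --- the vertex sum at a \emph{leaf} of the star --- consists of a triangle $v_1v_2v_3$, a vertex $s$ adjacent only to $v_1$, and $n-4$ pendant leaves attached to $s$; its leaves sit at distance $2$ from the cycle, so your step (3), which forbids a path of length at least $2$ hanging off a cycle vertex, would erroneously eliminate the very graph you are trying to characterize. (Compare the paper's case analysis for $m=3$ in Fig.~\ref{uniC3}: it is the configurations \emph{other} than this one that force $P(G)\le n-4$.)

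The second, independent gap is the lower bound $Z(G)\ge n-3$ in the reverse direction. Exhibiting a path cover of size $n-3$ only gives $P(G)\le n-3$, and your fallback --- Lemma~\ref{GK} applied ``symmetrically'' to a clique in $G$ --- cannot work: $G$ is unicyclic, so its largest clique is $K_3$ and Lemma~\ref{GK} yields only $Z(G)\ge 2$. The clique you identify lives in $\overline{G}$ and bounds $Z(\overline{G})$, which is what the subsequent corollary needs, not this theorem; and even there a clique on $n-3$ vertices gives only $Z(\overline{G})\ge n-4$ by Lemma~\ref{GK}, not $n-3$. The paper closes the gap for $G$ with Theorem~\ref{cutV} applied at the cut-vertex $v_1$: the two blocks are $C_3$ and $S_{n-2}$, so $Z(G)\ge 1-2+Z(C_3)+Z(S_{n-2})=1-2+2+(n-4)=n-3$. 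A direct minimality argument for the path cover (every path contains at most two neighbors of $s$, and the triangle cannot be covered by a single induced path) would also work, but some such argument must be supplied; as written your proposal establishes only the upper bound.
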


\begin{proof}
Let $G$ be a connected, unicyclic graph of order $n\geq 6$. Assume $Z(G)=n-3$. We first make the following\\

\textbf{Claim:} $diam(G)=3$.

\textit{Proof of Claim.} By Theorem~\ref{pathcover}(c), $diam(G) \le 3$. If $diam(G)=1$, then $G \cong K_n$ with $Z(G)=n-1$. If $G$ is unicyclic and $diam(G)=2$, then $G \in \{C_5, C_4, H\}$, where $H$ is the vertex sum of $C_3$ and $S_{n-2}$ at the major vertex of the star. Since $n \ge 6$ and $\overline{H}$ is disconnected by the fact that $\Delta(H)=n-1$, $diam(G) \ge 3$. Thus $diam(G)=3$.~$\Box$\\

Let $\mathcal{C}=C_m$ be the unique cycle of $G$. Note that $d_{\mathcal{C}}(x,y)=d_G(x,y)$ for $x, y \in \mathcal{C}$. Since $diam(G)=3$, $diam(\mathcal{C}) \le 3$, and hence $3 \le m \le 7$. If $m \in \{6,7\}$, then $G \in \{C_6, C_7\}$ and $Z(G)=2<n-3$. If $m=5$, then $G$ is isomorphic to (a) or (b) of Fig.~\ref{uniC}; in each case, $\ell, v_1, v_2, v_3, v_4$ is an induced path in $G$, and thus, by Theorem~\ref{pathcover}(c), $Z(G)=P(G) \le n-4$. If $m=4$, then $G$ is isomorphic to (c) or (d) of Fig.~\ref{uniC}. If $G$ is isomorphic to (c) of Fig.~\ref{uniC}, then $\ell_1, v_1, \ell_2$ (notice that $t \ge 2$ since $n \ge 6$) and $v_2, v_3, v_4$ are induced paths in $G$; thus $Z(G)=P(G) \le n-4$. If $G$ is isomorphic to (d) of Fig.~\ref{uniC}, then $\ell_1, v_1, v_2, \ell_2$ and $v_3, v_4$ are induced paths in $G$; thus $Z(G)=P(G) \le n-4$.

\begin{figure}[htbp]
\begin{center}
\begin{picture}(0,0)(60,55)
\setlength{\unitlength}{1.3pt}

\put(-55,-12){(a)}
\put(5,-12){(b)}
\put(65,-12){(c)}
\put(125,-12){(d)}
\put(-49,35){$\ldots$}
\put(11,35){$\ldots$}
\put(72,34){$\ldots$}
\put(132,34){$\ldots$}
\put(30,10){$\cdot$}
\put(30,7){$\cdot$}
\put(30,4){$\cdot$}
\put(150,7){$\cdot$}
\put(150,4){$\cdot$}
\put(150,1){$\cdot$}

\put(-50,25){\circle{3}}
\put(-47.5,24){$v_1$}
\put(-60,15){\circle{3}}
\put(-69,15){$v_5$}
\put(-40,15){\circle{3}}
\put(-38,14){$v_2$}
\put(-60,0){\circle{3}}
\put(-69,-1){$v_4$}
\put(-40,0){\circle{3}}
\put(-38,-1){$v_3$}
\put(-50,36){\circle{3}}
\put(-58,36){\circle{3}}
\put(-64,35){$\ell$}
\put(-38,36){\circle{3}}

\put(10,25){\circle{3}}
\put(12.5,24){$v_1$}
\put(0,15){\circle{3}}
\put(-9,15){$v_5$}
\put(20,15){\circle{3}}
\put(11,14){$v_2$}
\put(0,0){\circle{3}}
\put(-9,-1){$v_4$}
\put(20,0){\circle{3}}
\put(17,-6){$v_3$}
\put(10,36){\circle{3}}
\put(2,36){\circle{3}}
\put(-4,35){$\ell$}
\put(22,36){\circle{3}}
\put(31,15){\circle{3}}
\put(31,23){\circle{3}}
\put(31,3){\circle{3}}

\put(70,24){\circle{3}}
\put(72,23){$v_1$}
\put(60,12){\circle{3}}
\put(51,11){$v_4$}
\put(80,12){\circle{3}}
\put(82,11){$v_2$}
\put(70,0){\circle{3}}
\put(72,-1){$v_3$}
\put(62,35){\circle{3}}
\put(58,38){${\ell}_1$}
\put(70,35){\circle{3}}
\put(67,38){${\ell}_2$}
\put(82,35){\circle{3}}
\put(79,38){${\ell}_t$}

\put(130,24){\circle{3}}
\put(132,23){$v_1$}
\put(120,12){\circle{3}}
\put(111,11){$v_4$}
\put(140,12){\circle{3}}
\put(131,11){$v_2$}
\put(130,0){\circle{3}}
\put(132,-1){$v_3$}
\put(122,35){\circle{3}}
\put(114,34){${\ell}_1$}
\put(130,35){\circle{3}}
\put(142,35){\circle{3}}
\put(151,12){\circle{3}}
\put(151,20){\circle{3}}
\put(153.5,19){${\ell}_2$}
\put(151,0){\circle{3}}


\put(-58.5,0){\line(1,0){17}}
\put(-60,1.5){\line(0,1){12}}
\put(-40,1.5){\line(0,1){12}}
\put(-60, 16.5){\line(1,1){8.5}}
\put(-40,16.5){\line(-1,1){8.5}}
\put(-50,26.5){\line(-1,1){8}}
\put(-50,26.5){\line(0,1){8}}
\put(-50,26.5){\line(3,2){12}}

\put(1.5,0){\line(1,0){17}}
\put(0,1.5){\line(0,1){12}}
\put(20,1.5){\line(0,1){12}}
\put(0, 16.5){\line(1,1){8.5}}
\put(20,16.5){\line(-1,1){8.5}}
\put(10,26.5){\line(-1,1){8}}
\put(10,26.5){\line(0,1){8}}
\put(10,26.5){\line(3,2){12}}
\put(21.5,15){\line(1,1){8}}
\put(21.5,15){\line(1,0){8}}
\put(21.5,15){\line(2,-3){8}}

\put(70,22.5){\line(-1,-1){9}}
\put(70,22.5){\line(1,-1){9}}
\put(70,1.5){\line(1,1){9}}
\put(70,1.5){\line(-1,1){9}}
\put(70,25.5){\line(-1,1){8}}
\put(70,25.5){\line(0,1){8}}
\put(70,25.5){\line(3,2){12}}

\put(130,22.5){\line(-1,-1){9}}
\put(130,22.5){\line(1,-1){9}}
\put(130,1.5){\line(1,1){9}}
\put(130,1.5){\line(-1,1){9}}
\put(130,25.5){\line(-1,1){8}}
\put(130,25.5){\line(0,1){8}}
\put(130,25.5){\line(3,2){12}}
\put(141.5,12){\line(1,1){8}}
\put(141.5,12){\line(1,0){8}}
\put(141.5,12){\line(2,-3){8}}

\end{picture}
\end{center}
\vspace{.9in}
\caption{Unicyclic graphs $G$ with $\mathcal{C} \in \{C_5, C_4\}$ and $diam(G)=3$}\label{uniC}
\end{figure}
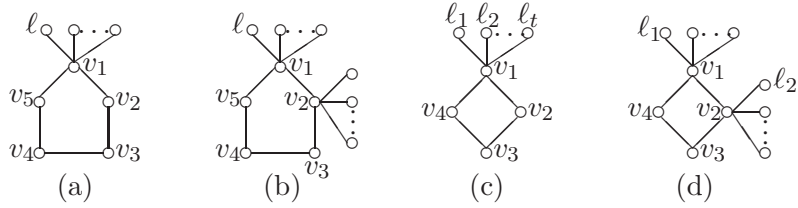

So, suppose that $m=3$; one can readily check that $G$ is isomorphic to one of the unicyclic graphs in Fig.~\ref{uniC3}. If $G$ is isomorphic to (a) of Fig.~\ref{uniC3}, i.e., $G$ is the vertex sum of $C_3$ and $S_{n-2}$ at one of the leaves of the star, we claim that $Z(G)=n-3$: (i) $Z(G) \ge n-3$ by Theorem~\ref{cutV}, since $Z(C_3)=2$ and $Z(S_{n-2})=n-4$; (ii) $Z(G) \le n-3$, since $\ell, s, v_1, v_2$ is an induced path in $G$. If $G$ is isomorphic to (b) of Fig.~\ref{uniC3}, then $\ell_1, s, v_1, \ell_t$ and $v_2, v_3$ are induced paths in $G$, and hence $Z(G) \le n-4$. If $G$ is isomorphic to (c) of Fig.~\ref{uniC3}, then, noting that $n \ge 6$, either $v_1$ or $v_2$, say $v_1$, has terminal degree at least two; then $r \ge 2$. Since $\ell_1, v_1, \ell_2$ and $v_2, v_3, \ell_t$ are induced paths in $G$, $Z(G)=P(G) \le n-4$. If $G$ is isomorphic to (d) of Fig.~\ref{uniC3}, then $\ell_1, v_1, v_2, \ell_2$ and $v_3, \ell_3$ are induced paths in $G$; thus $Z(G)=P(G) \le n-4$.~\hfill
\end{proof}

\begin{figure}[htbp]
\begin{center}
\begin{picture}(0,0)(60,55)
\setlength{\unitlength}{1.3pt}

\put(-55,-14){(a)}
\put(5,-14){(b)}
\put(65,-14){(c)}
\put(125,-14){(d)}
\put(-49,34){$\ldots$}
\put(11,34){$\ldots$}
\put(71,24){$\ldots$}
\put(131,24){$\ldots$}
\put(-5,10){$\cdot$}
\put(-5,7){$\cdot$}
\put(-5,4){$\cdot$}
\put(45,-5){$\cdot$}
\put(45,-8){$\cdot$}
\put(45,-11){$\cdot$}
\put(105,-5){$\cdot$}
\put(105,-8){$\cdot$}
\put(105,-11){$\cdot$}
\put(152,-4){$\cdot$}
\put(152,-7){$\cdot$}
\put(152,-10){$\cdot$}


\put(-60,0){\circle{3}}
\put(-69,-1){$v_3$}
\put(-40,0){\circle{3}}
\put(-38,-1){$v_2$}
\put(-50,15){\circle{3}}
\put(-47.5,14){$v_1$}
\put(-50,25){\circle{3}}
\put(-48,24){$s$}
\put(-50,36){\circle{3}}
\put(-58,36){\circle{3}}
\put(-64,35){$\ell$}
\put(-38,36){\circle{3}}

\put(0,0){\circle{3}}
\put(-3,-6){$v_3$}
\put(20,0){\circle{3}}
\put(18,-6){$v_2$}
\put(10,15){\circle{3}}
\put(12.5,14){$v_1$}
\put(10,25){\circle{3}}
\put(12,24){$s$}
\put(10,36){\circle{3}}
\put(7,39){${\ell}_2$}
\put(2,36){\circle{3}}
\put(-1,39){${\ell}_1$}
\put(22,36){\circle{3}}
\put(20,39){${\ell}_r$}
\put(-5,15){\circle{3}}
\put(-21,14){${\ell}_{r+2}$}
\put(-5,3){\circle{3}}
\put(-14,2){${\ell}_t$}
\put(-5,23){\circle{3}}
\put(-21,22){${\ell}_{r+1}$}

\put(60,0){\circle{3}}
\put(57,-6){$v_3$}
\put(80,0){\circle{3}}
\put(78,-6){$v_2$}
\put(70,15){\circle{3}}
\put(72.5,14){$v_1$}
\put(70,26){\circle{3}}
\put(67,29){${\ell}_2$}
\put(62,26){\circle{3}}
\put(59,29){${\ell}_1$}
\put(82,26){\circle{3}}
\put(80,29){${\ell}_r$}
\put(45,0){\circle{3}}
\put(29,-1){${\ell}_{r+2}$}
\put(45,-12){\circle{3}}
\put(37,-13){${\ell}_{t}$}
\put(45,8){\circle{3}}
\put(29,7){${\ell}_{r+1}$}

\put(120,0){\circle{3}}
\put(117,-6){$v_3$}
\put(140,0){\circle{3}}
\put(137,-6){$v_2$}
\put(130,15){\circle{3}}
\put(132.5,14){$v_1$}
\put(130,26){\circle{3}}
\put(122,26){\circle{3}}
\put(114,25){${\ell}_1$}
\put(142,26){\circle{3}}
\put(105,0){\circle{3}}
\put(105,-12){\circle{3}}
\put(96.5,-13){${\ell}_3$}
\put(105,8){\circle{3}}
\put(155,0){\circle{3}}
\put(155,-12){\circle{3}}
\put(155,8){\circle{3}}
\put(157.5,7){${\ell}_2$}

\put(-58.5,0){\line(1,0){17}}
\put(-51.5,15){\line(-2,-3){9}}
\put(-48.5,15){\line(2,-3){9}}
\put(-50,16.5){\line(0,1){7}}
\put(-50,26.5){\line(-1,1){8}}
\put(-50,26.5){\line(0,1){8}}
\put(-50,26.5){\line(3,2){12}}

\put(1.5,0){\line(1,0){17}}
\put(8.5,15){\line(-2,-3){9}}
\put(11.5,15){\line(2,-3){9}}
\put(10,16.5){\line(0,1){7}}
\put(10,26.5){\line(-1,1){8}}
\put(10,26.5){\line(0,1){8}}
\put(10,26.5){\line(3,2){12}}
\put(8.5,15){\line(-3,2){11.5}}
\put(8.5,15){\line(-1,0){11.5}}
\put(8.5,15){\line(-1,-1){12}}

\put(61.5,0){\line(1,0){17}}
\put(68.5,15){\line(-2,-3){9}}
\put(71.5,15){\line(2,-3){9}}
\put(70,16.5){\line(-1,1){8}}
\put(70,16.5){\line(0,1){8}}
\put(70,16.5){\line(3,2){12}}
\put(58.5,0){\line(-3,2){11.5}}
\put(58.5,0){\line(-1,0){11.5}}
\put(58.5,0){\line(-1,-1){12}}

\put(121.5,0){\line(1,0){17}}
\put(128.5,15){\line(-2,-3){9}}
\put(131.5,15){\line(2,-3){9}}
\put(130,16.5){\line(-1,1){8}}
\put(130,16.5){\line(0,1){8}}
\put(130,16.5){\line(3,2){12}}
\put(118.5,0){\line(-3,2){11.5}}
\put(118.5,0){\line(-1,0){11.5}}
\put(118.5,0){\line(-1,-1){12}}
\put(141.5,0){\line(1,-1){12}}
\put(141.5,0){\line(1,0){11.5}}
\put(141.5,0){\line(3,2){12}}

\end{picture}
\end{center}
\vspace{.9in}
\caption{Unicyclic graphs $G$ with $\mathcal{C}=C_3$ and $diam(G)=3$}\label{uniC3}
\end{figure}
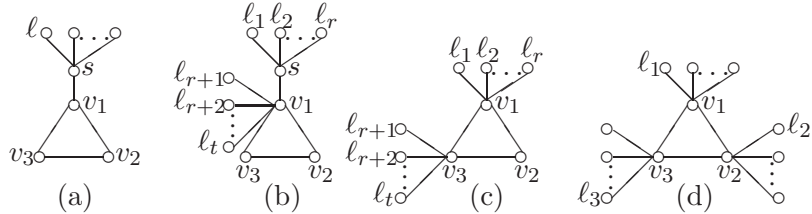

\begin{corollary}
Let $G$ be a connected, unicyclic graph of order $n\geq 5$ and having a connected $\overline{G}$. Then $Z(G)+Z(\overline{G})=2(n-3)$ if and only if $n=5$, or $n \ge 6$ and $G$ is the vertex sum of $C_3$ and $S_{n-2}$ at one of the leaves of the star.
\end{corollary}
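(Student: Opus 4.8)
The plan is to reduce everything to the ingredients already in hand. By Remark~\ref{remark}(b), the equality $Z(G)+Z(\overline{G})=2(n-3)$ is equivalent to $Z(G)=Z(\overline{G})=n-3$, so I would split on the order. For $n=5$ one has $n-3=2$, and Lemma~\ref{5} already gives $Z(G)=2=Z(\overline{G})$ for every unicyclic $G$ of order $5$ with connected complement; hence the equality holds automatically, which accounts for the clause ``$n=5$''. For $n\ge 6$, Theorem~\ref{main} says $Z(G)=n-3$ if and only if $G$ is the vertex sum of $C_3$ and $S_{n-2}$ at a leaf of the star. Thus the forward implication is immediate: if the sum attains $2(n-3)$, then in particular $Z(G)=n-3$, so $G$ has the stated form. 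All that remains for the reverse implication is to verify that for this specific $G$ one also has $Z(\overline{G})=n-3$; this is the only genuinely new computation.

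To carry out that computation I would first record the structure of $\overline{G}$. Writing $V(G)=\{s,v_1,v_2,v_3\}\cup\{\ell_1,\dots,\ell_{n-4}\}$ with triangle $v_1v_2v_3$ and star center $s$ adjacent to $v_1$ and to every $\ell_i$, one checks that in $\overline{G}$ the leaves $\ell_1,\dots,\ell_{n-4}$ form a clique $K_{n-4}$, each of $v_1,v_2,v_3$ is adjacent to all of the $\ell_i$, the three vertices $v_1,v_2,v_3$ are pairwise nonadjacent, and $s$ is adjacent only to $v_2,v_3$. The upper bound $Z(\overline{G})\le n-3$ is free from Theorem~\ref{connected}, since $\overline{G}$ is connected by hypothesis. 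The difficulty is the matching lower bound: because $\{\ell_1,\dots,\ell_{n-4},v_1\}$ induces $K_{n-3}$, Lemma~\ref{GK} yields only $Z(\overline{G})\ge n-4$, and since the clique number of $\overline{G}$ is exactly $n-3$, no choice of complete subgraph does better. So I expect the heart of the proof to be squeezing out the last $+1$, which forces one to use the fine structure of $\overline{G}$ rather than a clique bound. This step is the main obstacle, and it is exactly what distinguishes the unicyclic case from the tree case of Corollary~\ref{T+CT}, where the analogous clique bound already sufficed.

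For that last step I would argue that no set $S$ of size $n-4$ can be a zero forcing set, which suffices since enlarging a zero forcing set keeps it one. Call a nonempty $F\subseteq V(\overline{G})$ \emph{stalled} if every vertex outside $F$ has either $0$ or at least $2$ neighbors in $F$; a short induction on the color-change steps shows that if $F$ is stalled and disjoint from $S$, then $F$ stays white forever, so $S$ is not a zero forcing set. It then suffices to show that every $4$-element set $W=V(\overline{G})\setminus S$ contains a stalled set. Here the twin structure of $\overline{G}$ does the work: any two clique vertices $\ell_i,\ell_j$ are true twins and form a stalled pair; $\{v_2,v_3\}$ are false twins (both with neighborhood $\{s\}\cup\{\ell_1,\dots,\ell_{n-4}\}$) and form a stalled pair; and one further checks that $\{v_1,v_2,v_3\}$ and each of $\{s,v_1,v_2,\ell_k\}$, $\{s,v_1,v_3,\ell_k\}$ are stalled. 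A brief case analysis of $W$ according to how many of its four elements are clique vertices (at least two, exactly one, or none) then shows $W$ always contains one of these, giving $Z(\overline{G})\ge n-3$ and hence $Z(\overline{G})=n-3$. Combining this with Theorem~\ref{main}, Lemma~\ref{5}, and Remark~\ref{remark}(b) completes the characterization.
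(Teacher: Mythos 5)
Your proposal is correct, and its overall skeleton coincides with the paper's: both reduce the statement via Remark~\ref{remark}(b) to $Z(G)=Z(\overline{G})=n-3$, dispose of $n=5$ by Lemma~\ref{5} and of $Z(G)$ for $n\ge 6$ by Theorem~\ref{main}, get $Z(\overline{G})\le n-3$ from Theorem~\ref{connected}, and correctly identify the lower bound $Z(\overline{G})\ge n-3$ as the only new content (the clique bound of Lemma~\ref{GK} indeed tops out at $n-4$ here, since $\omega(\overline{G})=n-3$). Where you diverge is in how that last bound is extracted. The paper argues by contradiction on a hypothetical zero forcing set $S$ of size $n-4$: the two twin classes among the leaves force $|S\cap\{\ell_1,\ell_2\}|\ge 1$ and $|S\cap\{\ell_3,\dots,\ell_{n-2}\}|\ge n-5$ (in its labeling), which pins $S$ down up to relabeling to a single candidate, and that candidate is then checked to be stuck. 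You instead invoke the ``stalled set'' (fort) obstruction in full generality and show that \emph{every} $4$-element complement of a candidate $S$ contains a fort, via an explicit list of forts ($\{\ell_i,\ell_j\}$, $\{v_2,v_3\}$, $\{v_1,v_2,v_3\}$, $\{s,v_1,v_2,\ell_k\}$, $\{s,v_1,v_3,\ell_k\}$) and a case split on how many clique vertices the white set contains; I checked each of these is indeed a fort and that the case analysis is exhaustive. Your route is slightly longer but more systematic and avoids the ``without loss of generality'' reduction; the paper's twin argument is really the special case of your fort argument applied only to twin pairs, followed by a direct check of the one surviving configuration. Either way the conclusion $Z(\overline{G})=n-3$ follows, so your proof is complete.
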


\vskip .1in

\begin{figure}[htbp]
\begin{center}
\begin{picture}(0,0)(60,55)
\setlength{\unitlength}{1.3pt}

\put(-12,50){$G$}
\put(98,50){$\overline{G}$}
\put(12,12){$\cdot$}
\put(14,14){$\cdot$}
\put(16,16){$\cdot$}
\put(102,25){$\cdots$}
\put(112,25){$\cdot$}
\put(115,25){$\cdot$}
\put(75,21){\dashbox(64,12)}
\put(140,25){$K_{n-4}$}

\put(-20,0){\circle{3}}
\put(-28.5,-2){$\ell_2$}
\put(-20,40){\circle{3}}
\put(-29,40){$v_1$}
\put(0,0){\circle{3}}
\put(2.5,-2){$\ell_3$}
\put(0,40){\circle{3}}
\put(2,40){$v_2$}
\put(-30,20){\circle{3}}
\put(-38,19){$\ell_1$}
\put(20,20){\circle{3}}
\put(22.5,18){$\ell_{n-2}$}
\put(10,10){\circle{3}}
\put(12.5,8){$\ell_4$}

\put(100,0){\circle{3}}
\put(98,-6){$v_2$}
\put(100,40){\circle{3}}
\put(98,43){$v_1$}
\put(85,28){\circle{3}}
\put(76.5,26){$\ell_3$}
\put(100,28){\circle{3}}
\put(91.5,26){$\ell_4$}
\put(120,28){\circle{3}}
\put(122.5,26){$\ell_{n-2}$}
\put(90,10){\circle{3}}
\put(82,8){$\ell_1$}
\put(110,10){\circle{3}}
\put(112.5,8){$\ell_2$}

\put(-18.5,40){\line(1,0){17}}
\put(-30, 21.5){\line(1,2){9}}
\put(-30, 18.5){\line(1,-2){9}}
\put(-20,38.5){\line(0,-1){37}}
\put(0,38.5){\line(0,-1){37}}
\put(0,38.5){\line(1,-1){18}}
\put(0,38.5){\line(1,-3){9}}

\put(100,38.5){\line(0,-1){9}}
\put(100,38.5){\line(-3,-2){14}}
\put(100,38.5){\line(2,-1){19}}
\put(90,11.5){\line(-1,3){5}}
\put(90,11.5){\line(2,3){10}}
\put(90,11.5){\line(2,1){30}}
\put(110,11.5){\line(-3,2){24}}
\put(110,11.5){\line(-2,3){10}}
\put(110,11.5){\line(2,3){10}}
\put(100,1.5){\line(-1,1){8.5}}
\put(100,1.5){\line(1,1){8.5}}

\end{picture}
\end{center}
\vspace{.9in}
\caption{The unicyclic graph $G$ of order $n \ge 6$, with $Z(G)=Z(\overline{G})=n-3$}\label{ex2Z}
\end{figure}
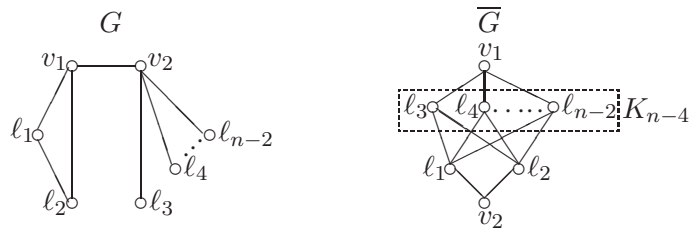

\begin{proof}
($\Longrightarrow$) It follows from Lemma~\ref{5} and Theorem~\ref{main}.

($\Longleftarrow$) If $n=5$, the result follows from Lemma \ref{5}. So, suppose that $n \ge 6$ and that $G$ is the vertex sum of $C_3$ and $S_{n-2}$ at one of the leaves of the star (see Fig.~\ref{ex2Z}). Then $Z(G)=n-3$ as shown in the proof of Theorem \ref{main}. We will show that $Z(\overline{G})=n-3$. Since both $G$ and $\overline{G}$ are connected, by Theorem \ref{connected}, $Z(\overline{G}) \le n-3$; it remains to show that $Z(\overline{G}) \ge n-3$. If we let $W_1=\{\ell_i \mid 1 \le i \le n-2 \mbox{ and } i \neq 2\}$ and $W_2= \{\ell_i \mid 2 \le i \le n-2\}$, then $\overline{G}[W_1] \cong K_{n-3} \cong \overline{G}[W_2]$, and thus, by Lemma \ref{GK}, $Z(\overline{G}) \ge n-4$. Assume that there exists a zero forcing set $S$ of $\overline{G}$ with $|S|=n-4$. Since $N_{\bar{G}}(\ell_1)=N_{\bar{G}}(\ell_2)$, $|S \cap \{\ell_1, \ell_2\}| \ge 1$. Similarly, since $N_{\bar{G}}(\ell_3)=N_{\bar{G}}(\ell_4)=\cdots=N_{\bar{G}}(\ell_{n-2})$, $|S \cap \{\ell_3, \ell_4, \ldots, \ell_{n-2}\}| \ge n-5$. Since $|S|=n-4$, without loss of generality, we may assume that $S=\{\ell_i \mid  2 \le i \le n-3\}$. But, then each vertex in $S$ has two or more white neighbors in $\overline{G}$; thus, there is no zero forcing set of cardinality $n-4$ in $\overline{G}$.~\hfill
\end{proof}

\textbf{Acknowledgements.} The authors thank two anonymous referees on an earlier draft of this paper for many constructive comments which significantly improved this paper. They also thank two additional referees for their careful reading of the paper and their helpful comments.

\end{document}